\newtheorem{theorem}{Theorem}[section]
\newtheorem{lemma}[theorem]{Lemma}
\newtheorem{corollary}[theorem]{Corollary}
\numberwithin{equation}{section}
\DeclareMathOperator{\grad}{grad}
\DeclareMathOperator{\curl}{curl}
\DeclareMathOperator{\im}{im} 
\newcommand{\sn}[1]{\mathbb{#1}}
\newcommand{\norm}[1]{\|#1\|} 
\newcommand{\ip}[1]{\langle #1 \rangle}
\newcommand{\ph}{\,\cdot\,}
\newcommand{\B}{\mathfrak{B}}
\newcommand{\h}{\mathfrak{H}}
\newcommand{\z}{\mathfrak{Z}}
\newcommand{\Div}{\operatorname{div}}
\begin{document}
\title[FEEC with lower-order terms]{Finite element exterior calculus with
  lower-order terms}
\author{Douglas N. Arnold} 
\address{School of Mathematics, University of Minnesota, Minneapolis,
  Minnesota 55455}
\email{arnold@umn.edu}
\author{Lizao Li}
\address{School of Mathematics, University of Minnesota, Minneapolis,
  Minnesota 55455}
\email{lixx1445@umn.edu}
\subjclass[2010]{Primary 65N30}
\keywords{finite element exterior calculus, lower order terms}
\thanks{The work of both authors were supported in part by NSF grant
  DMS-1418805.}
\date{September 21, 2015}

\begin{abstract}
  The scalar and vector Laplacians are basic operators in physics and
  engineering. In applications, they frequently show up perturbed by
  lower-order terms. The effect of such perturbations on mixed finite
  element methods in the scalar case is well-understood, but that in the
  vector case is not. In this paper, we first show that, surprisingly, for
  certain elements there is degradation of the convergence rates with
  certain lower-order terms even when both the solution and the data are
  smooth. We then give a systematic analysis of lower-order terms in mixed
  methods by extending the Finite Element Exterior Calculus (FEEC)
  framework, which contains the scalar, vector Laplacian, and many other
  elliptic operators as special cases. We prove that stable mixed
  discretization remains stable with lower-order terms for sufficiently
  fine discretization. Moreover, we derive sharp improved error estimates
  for each individual variable. In particular, this yields new results for
  the vector Laplacian problem which are useful in applications such as
  electromagnetism and acoustics modeling. Further our results imply many
  previous results for the scalar problem and thus unifies them all under
  the FEEC framework.
\end{abstract}

\maketitle

\section{Introduction}
The vector Laplace equation, and, more generally, the Hodge Laplace
equation associated to a complex, arise in many applications. The
discretization of such equations is a basic motivation of the Finite
Element Exterior Calculus (FEEC)~\cite{Arnold2006, Arnold2010}. In many
applications the equations include variable coefficients and lower-order
terms. While the former is included in the standard FEEC framework through
weighted inner products, the latter is not, which is the subject of this
work. One might expect that lower-order perturbations degrade neither the
stability nor the convergence rates of stable Galerkin methods. However,
this need not to be true. While stable choices of finite elements for the
unperturbed Hodge Laplacian remain stable for the perturbed equation, we
find that certain lower order perturbations result in decreased rates of
convergence. Other choices of element pairs or perturbations do not lower
the convergence rate. The situation is subtle.

First, to fix ideas, we consider a simple example taken from
magnetohydrodynamics \cite[Chapter 3]{Gerbeau2006}: given vector fields
$f,v$ on a domain $\Omega\subset \sn{R}^3$, find a vector field $B$
satisfying:
\begin{gather*}
  \curl \curl B -\curl (v\times B)= f, \qquad \Div B= 0, 
  \quad \text{in $\Omega$},\\
  B\cdot n = 0, \quad (\curl B-v\times B)\times n = 0,
  \quad \text{on $\partial\Omega$}.
\end{gather*}
Physically, $B$ is the non-dimensionalized magnetic field inside a
conductor moving with a velocity field $v$. The system admits a solution
only when $\Div f=0$. In that case, the solution also satisfies the
following vector Laplace equation, which is solvable for any data and hence
more suitable for discretization:
\begin{equation}
  \label{eq:mhd}
  -\grad \Div B + \curl\curl B- \curl (v\times B)  = f.
\end{equation}
For a mixed method, we introduce $\sigma=\curl B-v\times B$ and solve the
coupled system:
\begin{equation*}
  \sigma - \curl B + v\times B = 0, \qquad
  \curl \sigma - \grad\Div B = f.
\end{equation*}
A common stable choice of mixed elements, at least when $v=0$, seeks
$B$ in the space of N\'ed\'el\'ec face elements of the second kind of degree
$r\geq 1$, and $\sigma$ in the space of N\'ed\'el\'ec edge elements of the
second kind of degree $(r+1)$ \cite{Nedelec1980, Nedelec1986}. In this
case, for the unperturbed problem, that is when $v=0$, the convergence for
the $L^2$-error in $\sigma$ is of optimal order $O(h^{r+2})$ if the
solution is smooth enough. However, as we show in Section~\ref{sec:mainres}
and verify by numerical computation in Section~\ref{sec:numexp}, when $v$
does not vanish, the $L^2$-convergence for $\sigma$ is reduced to order
$O(h^{r+1})$. A similar phenomenon was observed in mixed methods for the
scalar Laplacian by Demlow~\cite{Demlow2002}. But the vector case we study
has more surprises. For example, consider the vector Laplacian perturbed by
a zeroth-order term: for some real coefficient $A$,
\begin{gather*}
  -\grad \Div u + \curl\curl u + Au  = f, 
  \quad \text{in $\Omega$},\\
  u\times n = 0, \quad \Div u = 0,
  \quad \text{on $\partial\Omega$}.
\end{gather*}
This problem arises, for example in electromagnetism where $u$ is the
electric field and $A$ is the conductivity coefficient. If we use the same
mixed finite element method just considered, here with $\sigma=\curl u$,
then the $L^2$ error in $\sigma$ is one order suboptimal for a general
matrix coefficient $A$, but optimal if $A$ is a scalar coefficient.

We now summarize convergence rates derived from our main abstract theorems
applied to the perturbed Hodge Laplace problem, of which the previous
vector Laplace problem is an instance. These are important in and directly
relevant to many applications. In particular, our results for the vector
cases, that is, $1$-forms in $2$D and $1$- and $2$-forms in $3$D, are new. On a
domain $\Omega$ in $\mathbb R^n$, a $k$-form has $\binom{n}{k}$
coefficients and the meaning of the exterior derivative $d$ and the
codifferential $\delta$ depends on $k$. For example, when $n=3$ and $k=2$,
we have the case considered before with
$\delta d + d\delta = -\grad \Div + \curl \curl$. For any $0\leq k\leq n$,
the $k$-form Hodge Laplace equation seeks a $k$-form $u$ satisfying:
\begin{equation}
  \label{eq:unperturbed-hodge-laplacian}
  L_0 u := (d\delta+\delta d)u=f,
\end{equation}
with proper boundary conditions. Our abstract theory applies to equation
\eqref{eq:unperturbed-hodge-laplacian} perturbed by general lower order
terms: 
\begin{equation*}
  Lu = [(d+l_3)(\delta+{l_2})+(\delta +{l_4})(d+l_1) +l_5]u = f,
\end{equation*}
with proper boundary conditions. In Section~\ref{sec:mainres}, we allow
$l_i$ to be general linear operators, but here we assume that they are
multiplication by smooth coefficient fields. For example, $l_1$ takes a
$k$-form to $(k+1)$-form, thus may be viewed as a
$\binom{n}{k+1}\times \binom{n}{k}$ matrix field.

The mixed formulation solves simultaneously for $u$ and for
$\sigma=(\delta + l_2)u$, which is a $(k-1)$-form.  On a simplicial
triangulation of $\Omega$, for any $r\ge 1$, we have four canonical pairs
of mixed finite elements for $(\sigma, u)$:
\begin{equation*}
  \mathcal{P}_{r+1}\Lambda^{k-1}\times \mathcal{P}_{r}\Lambda^k, \quad
  \mathcal{P}^-_{r+1}\Lambda^{k-1}\times \mathcal{P}_{r}\Lambda^k,  \quad
  \mathcal{P}_{r}\Lambda^{k-1}\times \mathcal{P}_{r}^-\Lambda^k,  \quad
  \mathcal{P}^-_{r}\Lambda^{k-1}\times \mathcal{P}^-_{r}\Lambda^k.
\end{equation*}
In dimension $\leq 3$, all these are classical mixed finite elements.  For
example, the pair consisting of N\'ed\'el\'ec face elements of the second kind and
N\'ed\'el\'ec edge elements of the second kind before is
$\mathcal{P}_{r+1}\Lambda^1\times \mathcal{P}_{r}\Lambda^2$. For more, the
Periodic Table of the Finite Elements (\url{http://femtable.org/}) collects
all these elements and their correspondence to classical elements. A main
result in FEEC is that the above four pairs lead to stable mixed finite
element methods for the unperturbed Hodge Laplace problem. Further, the
rates of convergence for the $L^2$-errors in $\sigma$, $d\sigma$, $u$, and
$du$ are optimal, as determined by the approximation properties of the
spaces.  Thus, for example, if the pair
$\mathcal{P}_{r+1}\Lambda^{k-1}\times \mathcal{P}_{r}\Lambda^k$ is used,
the order of convergence for the $L^2$-error in $\sigma$, $d\sigma$, $u$,
and $du$ are $r+2$, $r+1$, $r+1$, and $r$, respectively, while, for
$\mathcal{P}^-_{r}\Lambda^{k-1}\times \mathcal{P}^-_{r}\Lambda^k$ all four
$L^2$-errors converge with order $r$.

For the perturbed system, our discrete stability result
(Theorem~\ref{thm:stability}) implies that when the perturbed problem is
uniquely solvable at the continuous level (which is the generic case, as we
will prove), then the mixed discretization of this problem using any of the
four stable pairs before is still stable for sufficiently fine
discretization.  Further, if we have full elliptic regularity (for example
on a smooth domain \cite{Gaffney1951}), then our improved error estimate
(Theorem~\ref{thm:improved_est}) implies that the $L^2$ convergence rates
for the unperturbed Hodge Laplacian still hold for the perturbed problem
with a few exceptions, as summarized in Table~\ref{tb:conv}:
\begin{table}[h!]
  \caption{$L^2$-error rates for FEEC elements solving Hodge Laplace equation}
  \label{tb:conv}
  \begin{tabular}{ c | c | c | c | c } 
    \hline
    elements& $\sigma$ & $d\sigma$ & $u$ & $du$ \\ \hline
    $\mathcal{P}_{r+1}\Lambda^{k-1}\times \mathcal{P}_{r}\Lambda^k$ 
    & 
      $\scriptstyle
      \begin{cases}
        {r+2}, & \text{if no $l_2$, $l_4$, $l_5$},\\ 
        r+1,   & \text{otherwise.}
      \end{cases}$ 
    & $\scriptstyle
      \begin{cases}
        {r+1}, & \text{if no $l_4$},\\ 
        r, & \text{otherwise.}
      \end{cases}$ 
    & ${r+1}$ & $r$ \\
    $\mathcal{P}^-_{r+1}\Lambda^{k-1}\times \mathcal{P}_{r}\Lambda^k$ 
    & ${r+1}$
    & $\scriptstyle
      \begin{cases}
       {r+1}, & \text{if no $l_4$},\\ 
       r, & \text{otherwise.}
       \end{cases}$ 
    & ${r+1}$ & $r$ \\
    $\mathcal{P}_{r}\Lambda^{k-1}\times \mathcal{P}_{r}^-\Lambda^k$ 
    & $\scriptstyle
      \begin{cases}
       {r+1}, & \text{if no $l_2$, $l_5$},\\
       r, & \text{otherwise.}
       \end{cases}$ 
    & ${r}$& ${r}$& $r$ \\
    $\mathcal{P}^-_{r}\Lambda^{k-1}\times \mathcal{P}^-_{r}\Lambda^k$ 
    &$r$ & $r$ &$r$ & $r$\\ \hline
  \end{tabular}
\end{table}

For example, in our first example, $B$ is a 2-form and equation
\eqref{eq:mhd} has an $l_2$ lower-order term, which leads to reduced
$L^2$-error rates for $\sigma$. In practice, this suggests that the smaller
space $\mathcal{P}_{r+1}^-\Lambda^k$ should be used for $\sigma=\curl B$,
because the use of the bigger space $\mathcal{P}_{r+1}\Lambda^k$ does not
improve the convergence rates for $\sigma$ or any other quantity. In our
second example, the zeroth-order term is a generic $l_5$ term, hence it
also degrades the $L^2$-error rate in $\sigma$.  We observe that some
lower-order terms, namely $l_1$ and $l_3$ terms, do not degrade the error
rates in any of the cases above and that the $L^2$-convergence rates for
$u$ and $du$ are unaffected by the lower-order terms. We also note that,
most surprisingly, the lower-order term $l_4$ has the worst effect on the
convergence rates of the error in $\sigma$, degrading the $L^2$-error rates
in both $\sigma$ and $d\sigma$, yet $\sigma$ has no apparent dependence on
$l_4$. 

Historically, the effect of lower-order terms on the convergence of finite
element methods was first studied by Schatz~\cite{Schatz1974}, for the scalar
Laplace equation with Lagrange elements. The key tool in that analysis was the
Aubin-Nitsche duality argument, which was introduced to prove $L^2$-error
estimates in both non-mixed \cite{Nitsche1968} and mixed methods
\cite{Falk1980}. These ideas guide the current techniques. Directly relevant to
this work are the studies by Douglas and Roberts \cite{Douglas1982,
  Douglas1985} and Demlow \cite{Demlow2002} on mixed finite element
discretization of the scalar Laplace equation, which is the Hodge Laplace
equation for $n$-forms in $n$ dimensions.  The primary mixed finite elements
for this problem are the Raviart--Thomas (RT) family
$\mathcal P_r^-\Lambda^{n-1}$ \cite{Raviart1977} and the BDM family
$\mathcal P_r\Lambda^{n-1}$ \cite{Brezzi1985, Brezzi1987}.  Douglas and Roberts
proved the optimal $L^2$ convergence rates for both variables when the RT
family is used.  Demlow showed that for BDM elements, even for constant
coefficients and smooth solutions, there is degradation of the convergence rate
for $\sigma$ in the problem $-\Div (\grad u+ \vec{b}u)+cu=f$ while there is no
degradation if the same problem is formulated as
$-\Div\grad u + \vec{b}\cdot \grad u+c u=f$.  All these classical results on
non-mixed and mixed finite elements for scalar Laplace problem with lower-order
terms can be read off directly from Table~\ref{tb:conv}. Our approach here
gives a uniform derivation of all classical $L^2$-estimates for the
scalar/vector Laplacian perturbed by lower-order terms under the extended
abstract FEEC framework.

The rest of this paper is organized as follows. We first briefly review the
basic FEEC framework for the unperturbed abstract Hodge Laplacian in
Section~\ref{sec:review}. Then we lay out our extended abstract framework
and state our two main discrete result: stability theorem
(Theorem~\ref{thm:stability}) and improved error estimates
(Theorem~\ref{thm:improved_est}) in Section~\ref{sec:mainres}. After that,
we prove the well-posedness theorems at the continuous level in
Section~\ref{sec:continuous_wp}. Then we prove the two main discrete
results in Section~\ref{sec:stability} and Section~\ref{sec:improved_est}
respectively. Finally, in Section~\ref{sec:numexp}, we show that the
estimates are sharp for the Hodge Laplacian case through numerical
examples.

\section{Review of the abstract FEEC fraemwork}
\label{sec:review}
FEEC is an abstract framework for analyzing mixed finite element
methods~\cite{Arnold2006,Arnold2010}.  A \emph{Hilbert complex} $(W^k,d^k)$
is a sequence of Hilbert spaces $W^k$ and closed densly-defined linear
operators $d^k:W^k\rightarrow W^{k+1}$ with closed range satisfying
$d^{k+1} \circ d^k=0$.  We use $(\,\cdot\,,\,\cdot\,)$ to denote the
$W$-inner product and $\norm{\,\cdot\,}$ to denote the $W$-norm.  Let
$d_{k+1}^*$ be the adjoint of $d^k$, $V^k=D(d^k)$, and
$V^*_{k+1}:=D(d^*_{k+1})$. From this definition,
$(d^k u,v)=(u,d^*_{k+1} v)$, for all $u\in V^k$ and $v\in V^*_{k+1}$.  One
important structure is the \emph{Hodge decomposition}:
\begin{equation*}
  W^k = \z^k \oplus (\z^k)^{\perp W}
  =\B^k \oplus \h^k \oplus (\z^k)^{\perp W}
  =\B^k \oplus \h^k \oplus \B_k^*,
\end{equation*}
where $\z^k=\ker d^k$, $\B^k=\im d^{k-1}$, $\h^k=\z^k\cap (\B^k)^{\perp}$,
and $\B_k^*=\im d^*_{k+1}=(\z^k)^{\perp W}$. 

We assume the Hilbert complex satisfies the \emph{compactness property},
that $V^k\cap V_k^*$ is a compact densely embedded subspace of $W$. This is
true for all the cases we are interested in. For example, the de~Rham
complex on Lipschitz domains in $\sn{R}^n$ satisfies this
property~\cite{Picard1984}.

The abstract Hodge Laplacian is the unbounded operator
$L_0^k:W^k\rightarrow W^k$ defined by $L_0^k = d^{k-1}d^*_k+d^*_{k+1}d^k$
with the domain
$D(L_0^k)=\{u\in V^k\cap V_k^*\mid du \in V_{k+1}^*, d^*u \in V^{k-1}\}$.
In the following, we drop the index $k$ when it is clear from the
context. For example, $L_0 = dd^*+ d^*d$. It is known that for any
$f\in W^k$, there exists a unique $u\in D(L_0)$ such that
\begin{equation*}
  L_0u = f\mod \h, \quad u\perp \h.
\end{equation*}
Let $K_0:f\mapsto u$ be the solution operator above. It is known that $K_0$
is self-adjoint and compact as a map $W\rightarrow W$.

The mixed discretization of the abstract Hodge Laplacian is
well-understood. The problem above can be formulated in the \emph{mixed
  weak formulation}: given $f\in W$, find
$(\sigma, u, p)\in V^{k-1}\times V^k\times \h^k$ such that
\begin{equation}
  \label{eq:m0}
  \begin{aligned}
    &(\sigma, \tau) - (u,d\tau) =0, &&\forall \tau \in V^{k-1}, \\
    &(d\sigma, v) + (du, dv) + (p,v)= (f,v), && \forall v \in V^k,\\
    &(u, q) = 0,&&\forall q\in \h^k.
  \end{aligned}
\end{equation}
For each $k$, let $V_h^k$ be a sequence of discrete subspaces of $V^k$
indexed by $h$. $V_h^k$ is called \emph{dense} if
\begin{equation*}
  \forall u \in V^k,\quad 
  \lim_{h\rightarrow 0} \inf_{v\in V_h}\norm{u-v}_V=0.  
\end{equation*}
Clearly, density is necessary for $V_h^k$ to be good approximations of
$V^k$. However, it is known that this alone is not sufficient for the
Galerkin projection to be a convergent method. The key additional
properties are the \emph{subcomplex property} that
$dV^k_h\subset V^{k+1}_h$ and the existence of \emph{$V$-bounded cochain
  projections}, that is, bounded projections $\pi_h^k:V^k\rightarrow V_h^k$
satisfying $d\pi_h=\pi_hd$. The main result in FEEC states that the
Galerkin projection of system~\eqref{eq:m0} using dense discrete subspace
admitting bounded cochain projections is stable. More precisely, Theorem
3.8 of~\cite{Arnold2010} states that the bilinear form associated with the
system~\eqref{eq:m0}
\begin{equation*}
  B_0((\sigma, u, p), (\tau,v, q)) := 
  (\sigma, \tau) - (u,d\tau) + (d\sigma, v) + (du, dv) +(p,v)-(u,q),
\end{equation*}
satisfies the inf-sup condition on
$(V^{k-1}_h\times V^k_h\times \h^k_h)^2$. This implies quasi-optimal
convergence rates in the $V$-norm. Further if the bounded projections can
be extended to \emph{$W$-bounded cochain projections}, that is, the
extension to $\pi_h^k:W^k\rightarrow V_h^k$ exists and
$\norm{\pi_h^k}_{W\rightarrow W}$ are bounded uniformly in $h$, then we get
decoupled error estimates estimates for each variable in the $W$-norm. To
state this precisely, we need some notations. We use $a\lesssim b$ to
express that $a\le C b$ for some generic constant $C$.
Following~\cite{Arnold2010}, we let
\begin{equation}
  \label{eq:approx0}
  \begin{gathered}
    \delta_0 = \norm{(I-\pi_h)K_0}_{W^k\rightarrow W^k}, \qquad
    \mu_0 = \norm{(I-\pi_h)P_\h}_{W^k\rightarrow W^k},\\
    \eta_0 = \max_{j=0,1}\{\norm{(I-\pi_h)dK_0}_{W^{k-j}\rightarrow W^{k-j+1}}, 
    \norm{(I-\pi_h)d^*K_0}_{W^{k+j}\rightarrow W^{k+j-1}}\},\\
    \alpha_0 = \eta_0^2+\delta_0+\mu_0,
  \end{gathered}
\end{equation}
where $P_{\h}$ is the $W$-orthogonal projection from $W$ to $\h\subset W$.

All these quantities converge to $0$ as $h\rightarrow 0$ due to the compactness
and density assumption. For example on smooth or convex polyhedral domains for
the de~Rham complex, it is known that $\eta_0=O(h)$,
$\delta_0=O(h^{\min(2,r+1)})$, $\mu_0=O(h^{r+1})$, where $r$ is the largest
degree of complete polynomials in $V_h$. We use the following notation for the
best approximation: for $w\in V^k$,
\begin{equation*}
  E(w):=\inf_{v\in V^k_h} \norm{w-v}.
\end{equation*}
Theorem 3.11 of~\cite{Arnold2010} bounds the $W$-norm error of each
variable in terms of best approximation errors:
\begin{equation}
  \label{eq:werr0}
  \begin{gathered}
    \norm{\sigma-\sigma_h}\lesssim E(\sigma)
    +\eta_0 E(d\sigma),\\
    \norm{d(\sigma-\sigma_h)}\lesssim E(d\sigma), \\
    \norm{u-u_h} \lesssim E(u)
    +(\eta_0^2+\delta_0)[E(d\sigma)+E(p)]
    +\eta_0[E(du)+E(\sigma)], \\
    \norm{d(u-u_h)}\lesssim E(du)
    +\eta_0[E(d\sigma)+E(p)],\\
    \norm{p-p_h}\lesssim E(p)+\mu_0 E(d\sigma).
  \end{gathered}
\end{equation}
For example, using
$\mathcal{P}_{r+1}\Lambda^{k-1}\times\mathcal{P}_r\Lambda^k$ and assuming
full elliptic regularity, we have $E(\sigma)= O(h^{r+2})$ and
$E(d\sigma)= O(h^{r+1})$ for any $r\geq 1$.  Therefore, we get the optimal
rates $\|\sigma-\sigma_h\|= O(h^{r+2})$. Similarly, the error rates for all
variables can be shown to be optimal for all four canonical FEEC element
pairs. Let $P_h:W\rightarrow V_h$ be the $W$-orthogonal projection and
$K_{0h}:V^k_h\to V^k_h$ be the discrete solution operator
$K_{0h}:f\mapsto u_h$. These convergence estimates can be stated using
operators (cf. Corollary 3.17 of~\cite{Arnold2010}):
\begin{equation}
  \label{eq:est-op}
  \begin{gathered}
    \norm{K_0-K_{0h}P_h}_{W\rightarrow W}\lesssim \alpha_0,\\
    \norm{dK_0-dK_{0h}P_h}_{W\rightarrow W}+
    \norm{d^*K_0-d^*_hK_{0h}P_h}_{W\rightarrow W}\lesssim \eta_0.    
  \end{gathered}
\end{equation}
The FEEC approach not only gives optimal error rates, but also captures the
important structures of the Hodge Laplacian. We have the \emph{discrete
  Hodge decomposition}:
\begin{displaymath}
  V^k_h=\z^k_h\oplus\B^*_{k,h}=\B^k_h\oplus\h^k_h\oplus\B^*_{k,h}.
\end{displaymath}
The map $\pi_h^k$ ensures that $\z_h\subset \z$ and $\B_h \subset \B$, but
the discrete spaces $\h_h$ and $\B^*_h$ are generally not subspaces of
their continuous counterparts. At the continuous level, we have
\begin{equation}
  \label{eq:hodge}
  \forall f \in W, \qquad
  f= dd^*K_0f + d^*dK_0f + P_\h f \in \B\oplus \B^* \oplus \h,
\end{equation}
that is, solving the Hodge Laplacian problem with data $f$ leads to the
Hodge decomposition of $f$. At the discrete level, similarly, we have:
\begin{equation}
  \label{eq:hodgeh}
  \forall f\in V_h, \qquad
  f= dd^*_hK_{0h}f + d^*_hdK_{0h}f +P_\h f \in  
  \B_h\oplus \B^*_h \oplus \h_h.
\end{equation}
This orthogonality is the key ingredient in deriving decoupled $W$-norm
error estimates for each variable above and plays an important role in our
analysis as well. 

\section{Main results}
\label{sec:mainres}
We start by identifying $W^k$ with its dual and form a Gelfand triple
$V^k\cap V^*_k\subset W^k \subset (V^k\cap V^*_k)'$. We extend
$L_0=dd^*+d^*d$ to an operator $V\cap V^*\rightarrow (V\cap V^*)'$ which is
more suitable for studying perturbations: for all $u,v\in V^k\cap V^*_k$,
\begin{equation*}
  \ip{L_0u,v} := (d^*u,d^*v) + (du, dv).
\end{equation*}
Our main operator $L:V^k\cap V^*_k\rightarrow (V^k\cap V^*_k)'$ is obtained
by perturbing each abstract differential: for all $u,v\in V^k\cap V^*_k$,
\begin{equation}
  \label{eq:Lw}
  \ip{Lu,v} := ((d^*+l_2) u, (d^*+l_3^*) v) + ((d+l_1)u, dv) 
  + (l_4du ,v) + (l_5u, v),
\end{equation}
where $l_i:W\rightarrow W$ are bounded linear maps between appropriate
levels for $i=1,\ldots, 5$. More succinctly, we write,
\begin{equation*}
  L=(d+l_3)(d^*+l_2) u + d^*(d+l_1)u + l_4du + l_5u.
\end{equation*}
The grouping is convenient for the mixed form later. We prove in
Lemma~\ref{lem:almost_good} of the next section that this $L$ satisfies a
G{\aa}rding inequality. Then standard techniques in elliptic PDE theory
imply that $L$ is invertible up to some arbitrarily small perturbation to
$l_5$. Therefore, generically, it is reasonable to assume that $L$ is a
bounded isomorphism.

Let $D$ be the natural domain on which $L$ maps $W^k\rightarrow W^k$:
\begin{equation}
  \label{eq:D}
  D:=\{u\in V^k\cap V^*_k \,|\,
  (d^* +l_2)u\in V^{k-1},(d+l_1)u\in  V^*_{k+1}\}.
\end{equation}
Our main perturbed problem is: given $f\in W^k$, find $u\in D$ such that
\begin{equation}
  \label{eq:strong}
  Lu=f.
\end{equation}
We reformulate it in the mixed form: given $f\in W^k$ find
$(\sigma,u)\in V^{k-1}\times V^k$ satisfying
\begin{subequations}
  \label{eq:m}
  \begin{align}
    \label{eq:m1}
    &(\sigma, \tau) - (u, d\tau) - (l_2u,\tau) = 0,
    && \forall \tau\in V^{k-1}, \\
    \label{eq:m2}
    &((d+l_3)\sigma,v)+((d+l_1)u,dv)+(l_4du,v)+(l_5u,v)=(f,v),
    &&\forall v\in V^k.
  \end{align}
\end{subequations}
The first equation~\eqref{eq:m1} is equivalent to $u\in V^*_k$ and
$\sigma=(d^*+l_2)u$. The second equation~\eqref{eq:m1} is equivalent to
$(d+l_1)u\in V^*_{k+1}$ and
$d^*(d+l_1)u=f-(d+l_3)\sigma-l_4du-l_5u$. Hence, if $(\sigma,u)$
solves~\eqref{eq:m}, then $u$ solves~\eqref{eq:strong}. Therefore, it makes
sense to use this mixed formulation to solve our problem.

In addition to the assumption that $L$ is a bounded isomorphism, we need
more regularity assumptions on $l_i$ to ensure that $L^{-1}(W)\subset D$ so
that~\eqref{eq:strong} has a solution. One of our main tool for analyzing
the discretization is the duality argument, where the dual problem $L'z=g$
has to be solved as well. Here
$L':V^k\cap V^*_k\rightarrow (V^k\cap V^*_k)'$ is the dual of $L$. We
collection the conditions under which all these continuous problems are
well-posed in a theorem:
\begin{theorem}[Continuous well-posedness]
  \label{thm:wp}
  Let $(W,d)$ be a Hilbert complex with the compactness property. Suppose
  $L$ defined in equation~\eqref{eq:Lw} is a bounded isomorphism and
  \begin{equation}
    \label{eq:as}
    (d^* +l_2)L^{-1}(W)\subset V^{k-1},\qquad
    (d^* +l_3^*)(L')^{-1}(W)\subset V^{k-1}.
  \end{equation}
  Then both the perturbed problem~\ref{eq:strong} and its mixed
  formulation~\ref{eq:m} are well-posed.
\end{theorem}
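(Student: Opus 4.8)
The plan is to obtain solutions from the assumed isomorphism property of $L$ via the Gelfand triple $V^k\cap V^*_k\subset W^k\subset(V^k\cap V^*_k)'$, and then to use the regularity hypotheses~\eqref{eq:as} to place these weak solutions in the strong domain $D$ and in its dual counterpart. First I would treat the strong problem~\eqref{eq:strong}. Given $f\in W^k$, I regard $f$ as an element of $(V^k\cap V^*_k)'$ through $W^k\subset(V^k\cap V^*_k)'$, so that the isomorphism hypothesis produces a unique $u=L^{-1}f\in V^k\cap V^*_k$ with $\ip{Lu,v}=(f,v)$ for all $v\in V^k\cap V^*_k$. What must be checked is that $u\in D$, that is, both $(d^*+l_2)u\in V^{k-1}$ and $(d+l_1)u\in V^*_{k+1}$. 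The first inclusion is precisely the first half of~\eqref{eq:as}, so the substance is the second.

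To get $(d+l_1)u\in V^*_{k+1}$, set $\sigma=(d^*+l_2)u\in V^{k-1}$ and integrate by parts in the first factor of~\eqref{eq:Lw} (legitimate because $\sigma\in V^{k-1}$), rewriting the weak identity as $((d+l_3)\sigma,v)+((d+l_1)u,dv)+(l_4du,v)+(l_5u,v)=(f,v)$; hence $((d+l_1)u,dv)=(g,v)$ for all $v\in V^k\cap V^*_k$, where $g:=f-(d+l_3)\sigma-l_4du-l_5u\in W^k$. Writing $w:=(d+l_1)u\in W^{k+1}$, I must show the functional $v\mapsto(w,dv)$ is $W$-bounded on all of $V^k$, not merely on $V^k\cap V^*_k$. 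The device is the Hodge decomposition: for $v\in V^k$ write $v=v_\B+v_\h+v_{\B^*}$; since $\B^k,\h^k\subset\z^k\subset V^k$ the component $v_{\B^*}=v-v_\B-v_\h$ lies in $V^k$, while $v_{\B^*}\in\B_k^*\subset V^*_k$, so $v_{\B^*}\in V^k\cap V^*_k$, and moreover $dv=dv_{\B^*}$ because $v_\B,v_\h\in\ker d$. Applying the weak identity to $v_{\B^*}$ gives $(w,dv)=(w,dv_{\B^*})=(g,v_{\B^*})$, whence $|(w,dv)|\le\norm{g}\,\norm{v_{\B^*}}\le\norm{g}\,\norm{v}$, so $w\in V^*_{k+1}$ and $u\in D$. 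A second integration by parts and the density of $V^k\cap V^*_k$ in $W^k$ upgrade the weak identity to the strong equation $Lu=f$ in $W^k$; injectivity of $L$ yields uniqueness, and a closed-graph argument turns the set inclusions~\eqref{eq:as} into the stability bound $\norm{u}_{V\cap V^*}+\norm{\sigma}_V+\norm{w}_{V^*}\lesssim\norm{f}$.

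For the mixed formulation~\eqref{eq:m} I would first use the equivalence already noted after its statement: \eqref{eq:m1} is equivalent to $u\in V^*_k$ with $\sigma=(d^*+l_2)u$, and~\eqref{eq:m2} is equivalent to $(d+l_1)u\in V^*_{k+1}$ together with the strong equation, so that the pair $(\sigma,u)=\bigl((d^*+l_2)L^{-1}f,\,L^{-1}f\bigr)$ solves~\eqref{eq:m} and every solution of~\eqref{eq:m} yields a solution of~\eqref{eq:strong}. Thus existence and uniqueness for~\eqref{eq:m} follow from those for~\eqref{eq:strong}. The remaining point is the inf-sup stability of the mixed bilinear form on $(V^{k-1}\times V^k)^2$, and here the second hypothesis in~\eqref{eq:as} enters: the transpose of the mixed form is exactly the mixed form attached to the dual operator $L'$, whose strong and mixed problems are well-posed by repeating the first two paragraphs with $L'$ in place of $L$, using $(d^*+l_3^*)(L')^{-1}(W)\subset V^{k-1}$ where the first paragraph used the corresponding inclusion for $L$. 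Solvability of both the primal and the transposed mixed problems, with the stability bounds above, gives the two conditions of the Banach--Ne\v{c}as--Babu\v{s}ka theorem and hence well-posedness.

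The step I expect to be the main obstacle is the second membership $(d+l_1)u\in V^*_{k+1}$, because the weak identity is only at our disposal for test functions in $V^k\cap V^*_k$, which need not be dense in $V^k$ in the graph norm; one therefore cannot simply test against an arbitrary $v\in V^k$. The Hodge-decomposition reduction circumvents this by exploiting that $dv$ depends only on the $\B_k^*$-component of $v$, which automatically lies in $V^k\cap V^*_k$. A secondary nuisance is the bookkeeping in the duality argument, namely identifying which coefficients and adjoints make up $L'$ so that the two inclusions in~\eqref{eq:as} play genuinely symmetric roles for $L$ and $L'$.
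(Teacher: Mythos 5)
Your treatment of the strong problem is correct and essentially the paper's own argument (its Lemma on $L^{-1}(W)\subset D$ and the analogue for $L'$): use the first inclusion in~\eqref{eq:as} to put $\sigma=(d^*+l_2)u$ in $V^{k-1}$, rewrite the weak identity, and extend the test space from $V^k\cap V^*_k$ to $V^k$. In fact your Hodge-decomposition justification of that extension (only the $\B^*_k$-component of $v$ sees $dv$, and it lies in $V^k\cap V^*_k$ automatically) is more explicit than the paper's terse density remark, and your closed-graph derivation of the stability bound is sound. The gap is in the mixed half.

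Concretely: the paper defines well-posedness of the mixed form $B$ of~\eqref{eq:bf} as unique solvability for \emph{arbitrary} data $(g,f)\in V'_{k-1}\times V'_k$, which is equivalent to the inf-sup condition for $B$ on $V^{k-1}\times V^k$. Your equivalence with the strong problem produces solvability only for right-hand sides of the form $(0,f)$ with $f\in W^k$, and likewise for the transpose; these functionals form a far-from-dense subset of $V'_{k-1}\times V'_k$. Injectivity of $B$ and of its transpose together with solvability on such a restricted data set does \emph{not} yield the Banach--Ne\v{c}as--Babu\v{s}ka conditions in infinite dimensions (an injective operator with injective transpose can have dense, non-closed range, e.g.\ a compact injection), so your final step ``solvability of both problems gives the two BNB conditions'' fails. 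Quantitatively, transpose solvability with $W$-data and your stability bound supply, for a fixed trial pair $(\sigma,u)$, a test pair that controls only $\norm{u}_W$ --- not $\norm{\sigma}_V$ or $\norm{du}$. The missing idea is the paper's G{\aa}rding-like inequality~\eqref{eq:garding}: testing with the special pair $(\sigma,\,u+a\,d\sigma)$ gives $B((\sigma,u),(\sigma,u+a\,d\sigma))\geq b(\norm{\sigma}_V+\norm{u}_V)^2-c\norm{u}^2$, after which the dual corrector $(\xi,z)$ with $z=cKu$, $\xi=-(d^*+l_3^*)z$ (this is exactly where the second inclusion in~\eqref{eq:as} enters, as you anticipated) is added to the test pair to absorb $-c\norm{u}^2$ and produce the inf-sup constant; nondegeneracy of the transpose is then checked separately with $u=L^{-1}v$, $\sigma=(d^*+l_2)u$, using the first inclusion, and Babu\v{s}ka's theorem concludes. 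Without the G{\aa}rding step your route cannot reach the inf-sup condition, so the mixed-form well-posedness is not established. (A minor additional wrinkle you flagged yourself: the transpose of $B$ is the mixed form of $L'$ only up to a sign flip in the coupling $-(u,d\tau)+(d\sigma,v)$ --- visible in the minus sign of $\xi=-(d^*+l_3^*)z$ --- but that is cosmetic compared to the inf-sup gap.)
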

The proof is given in section~\ref{sec:continuous_wp}. The regularity
assumption is very mild, without which the perturbed problem does not even
make sense.  The solution operator to the dual problem $L'z=g$ will be used
frequently, so we give it a name. Let
\begin{equation*}
  K=(L')^{-1}:(V^k\cap V^*_k)'\rightarrow V^k\cap V^*_k.
\end{equation*}

Our first discrete result is the following fundamental theorem on mixed
methods for problems perturbed by lower-order terms.
\begin{theorem}[Discrete stability]
  \label{thm:stability}
  Under the assumptions of Theorem~\ref{thm:wp}, suppose further that
  $\norm{(dd^*+d^*d)K}_{W\rightarrow W}$ is bounded and the following
  operators are compact $W\rightarrow W$:
  \begin{equation*}
    dl_3^*K, \quad
    (l_1^*d-l_2^*d^*)K.
  \end{equation*}
  Let $V^k_h$ be a sequence of dense subcomplexes of $V^k$ admitting
  $W$-bounded cochain projections. Then the Galerkin projection of the
  mixed system~\eqref{eq:m} using the pair $V^{k-1}_h\times V^k_h$ is
  stable in the sense that there exist positive constants $h_0,C_0$ such
  that for any $h\in (0,h_0]$, there exists a unique discrete solution
  $(\sigma_h,u_h)\in V^{k-1}_h\times V^k_h$ satisfying~\eqref{eq:m} for
  test functions in $V^{k-1}_h\times V^k_h$ and that
  $\norm{\sigma_h}_V+\norm{u_h}_V \leq C_0\norm{f}$.
\end{theorem}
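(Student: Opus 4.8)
The statement bundles existence, uniqueness, and a uniform a priori bound, so my plan is to prove the single equivalent fact: the bilinear form $B$ associated with the discrete version of \eqref{eq:m} satisfies an inf--sup condition on $(V^{k-1}_h\times V^k_h)^2$ with a constant bounded below independently of $h$, valid for $h\le h_0$. Since the discrete spaces are finite dimensional and the system is square, such a condition delivers all three conclusions simultaneously. The backbone is the unperturbed theory: setting $l_1=\cdots=l_5=0$ recovers a form whose stability is quantified by the uniformly bounded discrete solution operator $K_{0h}$, the discrete Hodge decomposition \eqref{eq:hodgeh}, and the operator estimates \eqref{eq:est-op}. Two features make this more delicate than a routine perturbation: the backbone controls $\norm{\sigma_h}_V+\norm{u_h}_V$ only modulo lower-order $W$-norm quantities (this is the G{\aa}rding structure of Lemma~\ref{lem:almost_good}), and the discrete harmonic space $\h_h$, absent at the continuous level since $L$ is an isomorphism, must have its contribution to $u_h$ controlled through the perturbation itself.

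The main line of attack is to show that the lower-order terms perturb the stable backbone only \emph{compactly}, in a norm-convergent way. Concretely, I would eliminate $\sigma_h$ through \eqref{eq:m1} and rewrite the discrete system as an operator equation $(\mathcal I+\mathcal C_h)u_h=g_h$ on $V^k_h$, where the unperturbed principal part has been inverted via $K_{0h}$ so that it contributes the identity $\mathcal I$, and $\mathcal C_h$ collects the $l_i$ contributions, each composed with $K_{0h}$ and with $d$ or $d^*_h$. The hypotheses are tailored so that every such composition converges in $W\to W$ operator norm to its continuous counterpart with a \emph{compact} limit: the bound on $\norm{(dd^*+d^*d)K}_{W\to W}$ furnishes the dual-solution regularity needed to invoke \eqref{eq:est-op}, which bounds $\norm{dK_0-dK_{0h}P_h}$ and $\norm{d^*K_0-d^*_hK_{0h}P_h}$ by $\eta_0\to0$; and the assumed compactness of $dl_3^*K$ and of $(l_1^*d-l_2^*d^*)K$ forces the potentially derivative-losing terms in $\mathcal C_h$ to act through precompact sets, on which $\norm{(I-\pi_h)v}\to0$ uniformly for $v$ in the set, because the $\pi_h$ are $W$-bounded cochain projections converging pointwise to the identity. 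Assembling these estimates gives $\norm{\mathcal C_h-\mathcal C}_{W\to W}\to0$ with $\mathcal C$ compact.

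To close, I would use that $L$ is a bounded isomorphism (Theorem~\ref{thm:wp}): the continuous operator $\mathcal I+\mathcal C$ is then injective, hence invertible, being the identity plus a compact operator and so Fredholm of index zero. A standard perturbation lemma then makes $(\mathcal I+\mathcal C_h)^{-1}$ exist and be bounded uniformly in $h$ once $\norm{\mathcal C_h-\mathcal C}$ is small enough, which unwinds to $\norm{\sigma_h}_V+\norm{u_h}_V\le C_0\norm f$. I expect the main obstacle to be the norm-convergence step above, precisely the terms $l_4du$ and $(d+l_3)\sigma$ in which a derivative is taken \emph{after} a lower-order multiplication: a crude $V$-norm bound on these does not tend to zero, so their harmlessness is genuinely a compactness phenomenon, and this is exactly where the hypotheses are spent. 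The cleanest way to organize it is a duality argument, estimating these terms by testing against the solution of the adjoint problem $K=(L')^{-1}$ — which is why the compactness assumptions are phrased through $K$ and the adjoint coefficients $l_i^*$ rather than through $K_0$ and $l_i$. An equivalent route that avoids the explicit operator reformulation is a Schatz-type contradiction argument: assume stability fails along $h_n\to0$ with unit-$V$-norm discrete solutions, extract a subsequence converging weakly in $V$ and strongly in $W$ by the compactness property, identify the weak limit as a solution of the homogeneous continuous problem \eqref{eq:m} (hence zero, by Theorem~\ref{thm:wp}), and contradict the normalization using the G{\aa}rding inequality of Lemma~\ref{lem:almost_good}; in this realization the same compactness hypotheses are what legitimize the passage to the limit in the lower-order terms.
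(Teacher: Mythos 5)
Your two suggested routes bracket the paper's actual proof, but each one, as written, contains a step that fails. The paper runs the quantitative version of your Schatz-type argument: the G{\aa}rding inequality \eqref{eq:garding} transfers verbatim to the discrete level by the subcomplex property (the test function $(\sigma_h,u_h+ad\sigma_h)$ stays discrete), and the lost $W$-norm of $u$ is recovered by testing with a discrete approximation $(\xi_h,z_h)$ of the continuous dual solution $z=cKu$, $\xi=-(d^*+l_3^*)z$, built from two purpose-made operators: the generalized canonical projection $\Pi_h=P_{\z_h}+d^*_hK_{0h}P_hd$ of Theorem~\ref{thm:Pih} and the modified elliptic projection of Theorem~\ref{thm:mep}, combined in Theorem~\ref{thm:ddt}; the hypotheses that $\norm{(dd^*+d^*d)K}$ is bounded and that $dl_3^*K$, $(l_1^*d-l_2^*d^*)K$ are compact enter only to drive the quantities in \eqref{eq:approx}, hence the consistency error $\epsilon_h$ in \eqref{eq:dds}, to zero. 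Your contradiction variant is the qualitative shadow of this, but it has a genuine hole at the extraction step: you claim the normalized discrete solutions converge ``strongly in $W$ by the compactness property.'' The compactness property concerns $V^k\cap V^*_k$, not $V^k$: for the de~Rham complex, $H\Lambda^k$ (e.g.\ $H(\curl)$) is \emph{not} compactly embedded in $L^2$, and your $u_n\in V^k_{h_n}$ is bounded only in $V$; it does not lie in $V^*_k$, its coderivative information being purely discrete ($d^*_{h_n}u_n=\sigma_n-P_hl_2u_n$ via \eqref{eq:m1}). What you would need is a \emph{discrete} compactness theorem (a sequence with $\norm{u_n}_V+\norm{d^*_{h_n}u_n}$ bounded is $W$-precompact, given bounded cochain projections) --- a nontrivial FEEC result proved nowhere in this paper and not supplied by you. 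The alternative to importing it is to construct uniformly bounded discrete dual test functions with vanishing consistency error, which is exactly what the paper's new projections accomplish; their explicit rates are moreover reused later for Theorem~\ref{thm:improved_est}, which a qualitative compactness argument could never deliver.

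Your primary route has a second concrete failure: the norm setting. After eliminating $\sigma_h=d^*_hu_h+P_hl_2u_h$, your perturbation $\mathcal{C}_h$ contains terms of the shape (regularized $L_{0h}$)$^{-1}$ composed with $P_hl_3d^*_h$ and $P_hl_4d$; their arguments $d^*_hu_h$ and $du_h$ are not controlled by $\norm{u_h}_W$, so the family $\mathcal{C}_h$ is not uniformly bounded on $W$ and the claim $\norm{\mathcal{C}_h-\mathcal{C}}_{W\to W}\to0$ is not even well posed as stated. You must instead work in the discrete graph norm $\norm{u_h}+\norm{du_h}+\norm{d^*_hu_h}$ (equivalently, keep the mixed pair), and there the comparison of operators across the varying spaces $V_h$ is itself the hard point: $d^*_h$ lives on a different space for each $h$, and \eqref{eq:est-op} covers only the compositions $K_{0h}P_h$, $dK_{0h}P_h$, $d^*_hK_{0h}P_h$, not the right-composed ones like $K_{0h}P_hl_4d$ that your $\mathcal{C}_h$ requires; carrying out the adjoint manipulations you gesture at is precisely where the commuting identity $d\Pi_h=P_{\B_h}d$ and the partial orthogonality of Theorem~\ref{thm:Pih} are needed. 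Finally, the discrete harmonic space $\h_h$, which you rightly flag, is never actually handled in your reduction to $(\mathcal{I}+\mathcal{C}_h)u_h=g_h$; the paper absorbs it through the $P_{\h_h}P_{\h}$ correction in the modified elliptic projection. In short: you have correctly identified every ingredient --- G{\aa}rding structure, duality through $K=(L')^{-1}$ with the adjoint coefficients, compactness converted into $o(1)$ perturbations via uniform convergence of projections on precompact sets --- and correctly diagnosed that the derivative-bearing terms are where the hypotheses are spent, but both realizations contain a step that breaks, and repairing either one leads you to rebuild the projection machinery of Section~\ref{sec:stability}.
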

The proof is nontrivial and is given in Section~\ref{sec:stability}.  For
the de~Rham complex, suppose all the lower-order terms are multiplication
by smooth coefficients and the domain has $(1+\epsilon)$-regularity that
$\norm{Kf}_{H^{1+\epsilon}}\lesssim \norm{f}_{L^2}$ for $\epsilon>0$. By
definition, $L'=(d+l_2^*)(d^*+l_3^*)+(d^*+l_1^*)d+d^*l_4^*+l_5^*$ and
$K=(L')^{-1}$, we have
\begin{equation*}
  (dd^*+d^*d)K = I - (l_2^*d^*+dl_3^*+l_1^*d+d^*l_4^*+l_5^*+l_2^*l_3^*)K
\end{equation*}
is bounded $L^2\rightarrow L^2$. The compactness assumptions are satisfied
due to the compact embedding of Sobolev space $H^s$ into $L^2$ for
$s>0$. This proves the statement on the stability of mixed discretization
of Hodge Laplacian in the introduction.

It is well-known that stability guarantees optimal error rates in the
energy norm:
\begin{corollary}
  Under the assumptions of Theorem~\ref{thm:stability}, if $h\le h_0$, the
  unique discrete solution $(\sigma_h,u_h)$ satisfies
  \begin{equation*}
    \norm{\sigma-\sigma_h}_V+\norm{u-u_h}_V \lesssim
    \norm{(I-\pi_h)\sigma}_V+\norm{(I-\pi_h)u}_V.
  \end{equation*}
\end{corollary}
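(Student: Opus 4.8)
The plan is to run the standard C\'ea-type (Babu\v{s}ka) quasi-optimality argument for Galerkin methods, whose three ingredients here are: boundedness of the governing bilinear form in the $V$-norm, Galerkin orthogonality, and the uniform discrete inf-sup condition supplied by Theorem~\ref{thm:stability}. Write $\mathcal B$ for the form on the left-hand side of~\eqref{eq:m},
\begin{equation*}
  \mathcal B((\sigma,u),(\tau,v)) := (\sigma,\tau) - (u,d\tau) - (l_2 u,\tau)
  + ((d+l_3)\sigma,v) + ((d+l_1)u,dv) + (l_4 du,v) + (l_5 u,v),
\end{equation*}
so that the exact solution satisfies $\mathcal B((\sigma,u),(\tau,v)) = (f,v)$ for all $(\tau,v)\in V^{k-1}\times V^k$, while the discrete solution satisfies the same identity for all $(\tau,v)\in V^{k-1}_h\times V^k_h$. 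Equipping $X := V^{k-1}\times V^k$ with $\norm{(\tau,v)}_V := \norm{\tau}_V + \norm{v}_V$, I would first record that $\mathcal B$ is bounded on $X\times X$: since $d$ is bounded from $V$ to $W$ (by definition of the graph norm) and each $l_i$ is bounded $W\to W$, every term is controlled by a product of $V$-norms, so $|\mathcal B(x,y)| \lesssim \norm{x}_V\norm{y}_V$.

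Next, subtracting the discrete equations from the continuous ones over the test space $X_h := V^{k-1}_h\times V^k_h \subset X$ yields Galerkin orthogonality, $\mathcal B((\sigma-\sigma_h,\,u-u_h),\,y_h) = 0$ for all $y_h\in X_h$. I would then split the error through the cochain projection, setting $e_h := (\pi_h\sigma - \sigma_h,\ \pi_h u - u_h)\in X_h$, so that $\sigma-\sigma_h = (I-\pi_h)\sigma + (\pi_h\sigma - \sigma_h)$ and likewise for $u$. Reading the discrete stability of Theorem~\ref{thm:stability} as a uniform inf-sup bound for $\mathcal B$ on $X_h\times X_h$ with constant $\gamma>0$ independent of $h\le h_0$, there is $y_h\in X_h$ with $\gamma\norm{e_h}_V \le \mathcal B(e_h,y_h)/\norm{y_h}_V$. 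Writing $e_h = (\pi_h\sigma-\sigma,\ \pi_h u-u) + (\sigma-\sigma_h,\ u-u_h)$ and testing against $y_h$, the second summand drops out by Galerkin orthogonality, while the first is estimated by boundedness of $\mathcal B$, giving $\gamma\norm{e_h}_V \lesssim \norm{(I-\pi_h)\sigma}_V + \norm{(I-\pi_h)u}_V$. The triangle inequality $\norm{\sigma-\sigma_h}_V + \norm{u-u_h}_V \le \norm{(I-\pi_h)\sigma}_V + \norm{(I-\pi_h)u}_V + \norm{e_h}_V$ then finishes the estimate.

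The one point that requires care, rather than a deep obstacle, is the identification of the conclusion of Theorem~\ref{thm:stability}---stated as unique solvability together with the a priori bound $\norm{\sigma_h}_V + \norm{u_h}_V \le C_0\norm{f}$---with a uniform inf-sup condition for $\mathcal B$ on the square trial-test space $X_h\times X_h$, which is precisely what the C\'ea argument consumes. This is the stability notion established inside the proof of Theorem~\ref{thm:stability} (mirroring Theorem~3.8 of~\cite{Arnold2010}); once it is in hand the corollary is entirely routine, and indeed no property of the lower-order terms beyond their $W$-boundedness---already used for the continuity of $\mathcal B$---enters.
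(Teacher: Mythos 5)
Your proposal is correct and is precisely the standard Babu\v{s}ka/C\'ea quasi-optimality argument that the paper itself invokes without writing out (``It is well-known that stability guarantees optimal error rates in the energy norm''), resting on the uniform discrete inf-sup condition with constant $(b-c\epsilon_{h_0})/M$ that is established explicitly inside the proof of Theorem~\ref{thm:stability} in Section~\ref{sec:stability}. You also correctly flag the one genuine subtlety---that the \emph{statement} of Theorem~\ref{thm:stability} records only unique solvability and the a priori bound, while the C\'ea argument consumes the inf-sup condition proved en route---and resolve it exactly as the paper intends.
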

As is common for mixed methods, the energy norm estimate is crude because
it couples errors of different variables. For example, for the unperturbed
Hodge Laplacian solved with the FEEC pair
$\mathcal{P}_{r+1}\Lambda^{k+1}\times \mathcal{P}_{r}\Lambda^k$, the
convergence rate in $\norm{\sigma}$ is in fact $h^2$ higher than that in
$\norm{du}$ but is lumped together with it. Our next finer discrete result
gives the decoupled $W$-norm estimates for each variable similar to
estimates~\eqref{eq:werr0} for the unperturbed problem. For that, we need
more assumptions. To simplify the bookkeeping, we define some approximation
quantities:
\begin{equation}
  \label{eq:approx}
  \begin{gathered}
    \delta=\max\{\delta_0, \norm{(I-P_h)l^*_3K},
    \norm{(I-P_h)(l_5^*-l_2^*l_3^*)K},
    \norm{(I-P_h)P_\B l_4^*K},\},\\
    \eta=\max\{\eta_0,\mu_0,\delta, \norm{(I-P_h)dl_3^*K},
    \norm{(I-P_h)(l_1^*d-l_2^*d^*)K}, \}, \\
    \alpha = \delta+\eta^2+\mu_0.
  \end{gathered}
\end{equation}
where $\eta_0,\delta_0,\mu_0$ are defined in equation~\eqref{eq:approx0}
and all the operator norms are in $\norm{\ph}_{W\rightarrow W}$. As,
before, due to the compactness assumptions and density, all
$\delta,\eta,\alpha\rightarrow 0$ as $h\rightarrow 0$.
\begin{theorem}
  \label{thm:improved_est}
  In addition to the assumptions of Theorem \ref{thm:stability}, assume that
  \begin{equation*}
    \norm{d(l_1^*d-l_2^*d^*+l_5^*-l_2^*l_3^*)K}_{W\rightarrow W}
  \end{equation*}
  is bounded, then we have the following improved error estimates:
  \begin{multline*}
    \norm{\sigma-\sigma_h}\lesssim E(\sigma)
    +(\eta+\chi_{45}\sqrt{\alpha})E(d\sigma)\\
    +(\chi_{24}+\chi_3\eta+\chi_5\sqrt{\eta})E(u)
    +(\chi_3\alpha+\chi_{45}\sqrt{\alpha})E(du),
  \end{multline*}
  \begin{multline*}
    \norm{d(\sigma-\sigma_h)}\lesssim E(d\sigma)
    +(\chi_3\alpha+\chi_4+\chi_5\eta)E(du)\\
    +(\chi_{45}+\chi_3\eta+\chi_2\chi_3)E(u)
    +(\chi_3+\chi_4\eta)E(\sigma),
  \end{multline*}
  \begin{equation*}
    \norm{u-u_h}\lesssim E(u)
    +\eta E(du)+\eta E(\sigma)
    +(\alpha+\chi_{45}\sqrt{\alpha})E(d\sigma),
  \end{equation*}
  \begin{equation*}
    \norm{d(u-u_h)}\lesssim E(du)+\eta E(d\sigma)
    +\chi_{1345}E(u)+(\chi_3+\chi_{145}\eta)E(\sigma),
  \end{equation*}
  where $\chi_{i\ldots j}$ denote the presence of lower-order terms. For
  example, $\chi_{125}=1$ if $l_1\neq 0$ or $l_2\neq 0$ or $l_5\neq 0$, and
  $\chi_{125}=0$ otherwise.
\end{theorem}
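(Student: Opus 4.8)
The plan is to prove Theorem~\ref{thm:improved_est} by a duality (Aubin--Nitsche) argument, mirroring the structure of the unperturbed estimates~\eqref{eq:werr0} but carefully tracking how each lower-order term $l_i$ enters. The starting point is the Galerkin orthogonality for the perturbed mixed system~\eqref{eq:m}: writing $e_\sigma=\sigma-\sigma_h$ and $e_u=u-u_h$, the bilinear form underlying~\eqref{eq:m} annihilates $(e_\sigma,e_u)$ against all discrete test functions $(\tau,v)\in V^{k-1}_h\times V^k_h$. I would first split each error into an approximation part and a discrete part, $e_\sigma=(\sigma-\pi_h\sigma)+(\pi_h\sigma-\sigma_h)$ and similarly for $e_u$, using the $W$-bounded cochain projection $\pi_h$ furnished by Theorem~\ref{thm:stability}. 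The approximation parts contribute the $E(\cdot)$ terms directly; the work is to bound the discrete parts $\pi_h\sigma-\sigma_h$ and $\pi_hu-u_h$ in the $W$-norm.

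To estimate a discrete error in the $W$-norm, say $\norm{e_u}$, I would solve the dual problem $L'z=g$ using the solution operator $K=(L')^{-1}$ from Theorem~\ref{thm:wp}, testing against a suitable dual datum $g$ and its mixed decomposition. The key is that $K$ inherits the regularity expressed through the compactness and boundedness hypotheses of Theorem~\ref{thm:stability} together with the new boundedness assumption on $\norm{d(l_1^*d-l_2^*d^*+l_5^*-l_2^*l_3^*)K}$. After inserting the dual solution, one uses Galerkin orthogonality to subtract off the cochain projection of $(\sigma_z,z)$, turning the error functional into a sum of terms each pairing an error $e_\sigma$ or $e_u$ against a projection-approximation residual $(I-\pi_h)$ or $(I-P_h)$ applied to $z$, $dz$, $d^*z$, or to the lower-order combinations appearing in~\eqref{eq:approx}. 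These residuals are exactly what $\eta$, $\delta$, $\mu_0$, and $\alpha$ measure, which is why those quantities were defined the way they were. Bounding each such pairing by Cauchy--Schwarz and the definitions~\eqref{eq:approx}, then absorbing the discrete-error factors via the discrete stability from Theorem~\ref{thm:stability} (this is where $h\le h_0$ is used), produces the claimed estimates. The indicator symbols $\chi_{i\dots j}$ appear naturally: a term survives only when the corresponding $l_i$ is nonzero, since otherwise the operator whose approximation residual is being measured vanishes identically.

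The main obstacle will be the appearance of the half-powers $\sqrt{\alpha}$ and $\sqrt{\eta}$ attached to the $\chi_4$ and $\chi_5$ indicators, which have no analogue in the unperturbed estimates~\eqref{eq:werr0}. These arise because the $l_4$ and $l_5$ terms couple $d u$ and $u$ into the $\sigma$-equation in a way that cannot be fully desymmetrized by the dual argument; the natural bound produces a product of two discrete-error norms rather than a residual times a single error, and one must split it with a weighted Young's inequality, losing a square root. Getting these exponents sharp — rather than settling for a cruder bound — is the delicate point, and it is precisely this sharpness that forces the somewhat intricate definitions of $\delta$ and $\eta$ in~\eqref{eq:approx} with their specific lower-order correction terms such as $\norm{(I-P_h)P_\B l_4^*K}$ and $\norm{(I-P_h)(l_5^*-l_2^*l_3^*)K}$. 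I would handle the four estimates in the order $\norm{d(\sigma-\sigma_h)}$, $\norm{\sigma-\sigma_h}$, $\norm{d(u-u_h)}$, $\norm{u-u_h}$, since the $d$-errors require weaker dual regularity and their bounds feed into the undifferentiated estimates through the discrete Hodge decomposition~\eqref{eq:hodgeh}, whose orthogonality is the structural tool that keeps the cross-terms under control.
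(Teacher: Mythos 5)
Your high-level plan---duality via $K=(L')^{-1}$, tracking each $l_i$ through the approximation quantities \eqref{eq:approx}, and obtaining the half-powers from a product of two error norms split by a weighted Young inequality---matches the paper in outline, and your diagnosis of the square roots is essentially right: in the paper's Lemma~\ref{lem:psigma} the quantity actually bounded is $\norm{e_\sigma}^2$, and the duality term is paired against $\norm{P_{\B_h}de_\sigma}\lesssim \chi_3\norm{e_\sigma}+\chi_4\norm{de_u}+\chi_5\norm{e_u}$, which is where $\sqrt{\alpha}$ and $\sqrt{\eta}$ arise. But there is a genuine gap at the core of your duality step: you propose to discretize the dual solution by subtracting its cochain projection $\pi_h$. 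Pairing each error with a single $(I-\pi_h)$ residual of the dual solution can only produce first-order factors of size $\eta$; it can never generate the quadratic quantity $\alpha=\delta+\eta^2+\mu_0$ that multiplies $E(d\sigma)$ in the $\norm{u-u_h}$ bound, so your route reproduces at best energy-norm-level accuracy, not the improved rates. (The paper makes this point explicitly in Section~\ref{sec:stability}: both ``obvious choices'' of discrete dual pair fail.) What the paper actually does is construct a special discrete dual pair $z_h=K_{0h}P_hL_0z+P_{\h_h}P_{\h}z$, $\xi_h=-d_h^*z_h-\Pi_h l_3^*z$ (Theorem~\ref{thm:ddt}), built from two new projections: the modified elliptic projection of Theorem~\ref{thm:mep} and the generalized canonical projection $\Pi_h=P_{\z_h}+d_h^*K_{0h}P_hd$ of Theorem~\ref{thm:Pih}. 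The identities $d\Pi_h=P_{\B_h}d$ and $P_{\B_h}\Pi_h=P_{\B_h}$ kill the leading consistency term ($Q_1=0$ in Lemma~\ref{lem:duality}), the error functional tested is $(\Pi_h e_u,g)$ rather than $(e_u,g)$, and the ``partial orthogonality'' of $\Pi_h$ is what upgrades the perturbation terms to order $\alpha$. This is also exactly where the theorem's extra hypothesis enters, which your sketch never localizes: partial orthogonality is applied with second argument $v=(l_1^*d-l_2^*d^*+l_5^*-l_2^*l_3^*)Kg$, and it requires $\norm{dv}\lesssim\norm{g}$, i.e.\ the assumed boundedness of $\norm{d(l_1^*d-l_2^*d^*+l_5^*-l_2^*l_3^*)K}$.

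A second, smaller but real, flaw is your proposed sequential order $\norm{d(\sigma-\sigma_h)}\to\norm{\sigma-\sigma_h}\to\norm{d(u-u_h)}\to\norm{u-u_h}$. This cannot close: the preliminary bound for $\norm{de_\sigma}$ (Lemma~\ref{lem:pdsigma}) contains $\chi_4\norm{de_u}+\chi_5\norm{e_u}$ with $O(1)$ constants, and the bound for $\norm{e_\sigma}$ (Lemma~\ref{lem:psigma}) contains $\chi_{24}\norm{e_u}$ with an $O(1)$ constant, so neither $\sigma$-estimate can be completed before the $u$-estimates. The coupling genuinely runs in both directions, and the paper resolves it not sequentially but simultaneously: four coupled preliminary estimates are assembled into a linear system whose coefficient matrix, in the order $(\norm{de_\sigma},\norm{de_u},\norm{e_\sigma},\norm{e_u})$, has zero diagonal, $O(1)$ entries above, and $\epsilon$-small entries below the diagonal, after which substitution and absorption for sufficiently small $h$ eliminate all error norms in favor of the best-approximation terms $E(\cdot)$. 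Your plan would need to be repaired both here and, more fundamentally, in the construction of the discrete dual pair before it could yield the stated coefficients.
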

The proof is subtle and is given in Section~\ref{sec:improved_est}. 
\begin{corollary}
  Suppose a Hodge Laplacian problem satisfies full $2$-regularity:
  \begin{equation*}
    \norm{Kf}_{H^{s+2}}\lesssim \norm{f}_{H^s},\quad
    \text{for all $s\geq 0$},
  \end{equation*}
  (for example, on a smooth domain), then the error estimates for the
  discretization using FEEC elements are given by Table~\ref{tb:conv}.
\end{corollary}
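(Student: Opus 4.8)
The plan is to read the corollary as a direct substitution into the four estimates of Theorem~\ref{thm:improved_est}. Each of those estimates bounds a $W$-norm error by a sum of best-approximation errors $E(\sigma),E(d\sigma),E(u),E(du)$ multiplied by the approximation quantities $\eta,\delta,\alpha,\mu_0$ of \eqref{eq:approx} and the presence-indicators $\chi$. Once I know the $h$-power of every factor, each tabulated rate is just the smallest exponent among the terms that are switched on. Thus the work divides into three pieces: (i) the rates of the best-approximation errors, (ii) the rates of $\eta,\delta,\alpha$, and (iii) a per-pair, per-variable minimization that tracks the $\chi$'s.

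For (i), full $2$-regularity makes the solution $u$, and hence $\sigma=(d^*+l_2)u$, $d\sigma$ and $du$, as smooth as the data, so the best-approximation errors are governed solely by the polynomial degrees: a $\mathcal P_s$-type space approximates a smooth form to order $s+1$, a $\mathcal P_s^-$-type space to order $s$, with $E(\sigma)$ controlled by $V_h^{k-1}$, both $E(u)$ and $E(d\sigma)$ by $V_h^k$, and $E(du)$ by $V_h^{k+1}$. These are exactly the unperturbed FEEC rates; e.g.\ for $\mathcal P_{r+1}\Lambda^{k-1}\times\mathcal P_r\Lambda^k$ one gets $E(\sigma)=O(h^{r+2})$, $E(d\sigma)=E(u)=O(h^{r+1})$, $E(du)=O(h^r)$, and similarly for the other three pairs.

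For (ii), I use that the dual solution operator $K=(L')^{-1}$ gains two derivatives, that each $l_i$ is multiplication by a smooth field (order $0$), and that $d,d^*$ are order $1$, together with the $L^2$-projection bound $\norm{(I-P_h)w}\lesssim h^{\min(s,\mathrm{ord})}\norm{w}_{H^s}$. Then the zeroth-order quantities in $\delta$ (namely $l_3^*K$, $(l_5^*-l_2^*l_3^*)K$, $P_\B l_4^*K$) act on $H^2$ forms and are $O(h^{\min(2,\mathrm{ord})})$, while the first-order quantities in $\eta$ (namely $dl_3^*K$ and $(l_1^*d-l_2^*d^*)K$) act on $H^1$ forms and are $O(h)$. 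Combined with $\eta_0=O(h)$, $\delta_0=O(h^2)$ and $\mu_0=O(h^{\mathrm{ord}(V_h^k)})$ from \eqref{eq:approx0}, this yields $\eta=O(h)$ in every case, $\delta,\alpha=O(h^{\min(2,\cdot)})$, and in particular $\sqrt\alpha=O(h)$ once the relevant spaces have order $\ge 2$.

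Finally, for (iii), I substitute and minimize. The estimates for $u$, $du$ and $d\sigma$ contain no half-order factors, so they reproduce their table columns cleanly: $E(u)$ and $\eta E(du)$ pin $u$ at its optimal rate, $E(du)$ pins $du$, and the single term $\chi_4 E(du)$ produces the drop $r+1\to r$ in $d\sigma$ exactly when $l_4$ is present. The delicate column is $\sigma$, where the terms $\chi_{24}E(u)$ and $\chi_{45}\sqrt\alpha\,E(du)$ realize the drop $r+2\to r+1$; the main obstacle is precisely to control the half-order factors $\sqrt\alpha$ and $\sqrt\eta$ and to check, across all sixteen pair/variable combinations, that they never fall below the tabulated integer rate while some active term attains it. This is immediate once the governing space has order $\ge 2$ (so that $\sqrt\alpha\,E(du)=O(h)\,O(h^r)=O(h^{r+1})$), and the sharpness of the resulting rates --- that the loss really occurs --- is confirmed by the numerical experiments of Section~\ref{sec:numexp}.
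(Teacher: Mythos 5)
Your proposal is correct and follows essentially the same route as the paper's own proof: establish $\eta_0=O(h)$, $\delta_0=O(h^{\min(2,r+1)})$, $\mu_0=O(h^{r+1})$, use the $2$-regularity of $K$ together with the smoothness of the coefficients and the best-approximation properties of the spaces to get $\eta=O(h)$ and $\delta,\alpha=O(h^{\min(2,r+1)})$, and then substitute these and the standard best-approximation rates into the four estimates of Theorem~\ref{thm:improved_est}, tracking the $\chi$ indicators to read off Table~\ref{tb:conv}. The one small slip is your remark that the $u$-estimate contains no half-order factors --- it does contain the term $\chi_{45}\sqrt{\alpha}\,E(d\sigma)$ --- but since $\sqrt{\alpha}=O(h)$ for $r\ge 1$ this is harmless and your conclusions are unaffected.
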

\begin{proof}
  Following the discussion after equation~\eqref{eq:approx0}, we have
  $\eta_0=O(h)$, $\delta_0=O(h^{\min(2,r+1)})$, $\mu_0=O(h^{r+1})$, where $r$
  is the largest degree of complete polynomials in $V_h$. Using the best
  approximation estimates for FEEC elements, we see that $\eta=O(h)$ and
  $\delta=O(h^{\min(2,r+1)})$ as well. Plugging these and the best
  approximation estimates into Theorem~\ref{thm:improved_est}, we get the rates
  in Table~\ref{tb:conv}.
\end{proof}

\section{Well-posedness at the continuous level}
\label{sec:continuous_wp}
In this section, we establish well-posedness results for the continuous
problem and its mixed formulation.
\subsection{Well-posedness for the primal form}
First, we prove that the perturbed bounded operator is almost always an
isomorphism.
\begin{lemma}
  \label{lem:almost_good}
  Let $(W^k,d)$ be a Hilbert complex having the compactness property with
  domains $V^k$. Let $L$ be defined as in~\eqref{eq:Lw}. Then,
  $L+\lambda I$ has a bounded inverse for all $\lambda\in \sn{C}$ except at
  a discrete subset (so at most countable).
\end{lemma}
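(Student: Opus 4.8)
The plan is to establish a Gårding inequality for $L$ and then invoke the analytic Fredholm theory. The key observation is that the principal part of $L$, namely the unperturbed operator $L_0$ with $\ip{L_0 u, v} = (d^* u, d^* v) + (du, dv)$, controls the full graph norm $\norm{u}_{V\cap V^*}^2 = \norm{u}^2 + \norm{du}^2 + \norm{d^* u}^2$ only up to the harmonic part $\h^k$, since $L_0$ annihilates harmonic forms. To fix this I would add a suitable multiple of the identity (or more precisely work with $L + \lambda I$ from the start), using the Hodge decomposition $W^k = \B^k \oplus \h^k \oplus \B^*_k$ to see that $\ip{L_0 u, u} + c\norm{u}^2 \gtrsim \norm{u}_{V\cap V^*}^2$ for $c$ large enough.

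Second, I would absorb the lower-order perturbations. Each term $l_i$ is bounded $W \to W$, so when I expand $\ip{L u, u}$ the cross terms such as $(l_2 u, d^* v)$, $(l_3^* v, d^* u)$, $(l_1 u, dv)$, $(l_4 du, v)$ and $(l_5 u, u)$ are each bounded by $\norm{u}_{V\cap V^*}\norm{u}$ (the factors involving $du$ or $d^* u$ pair against an $l_i u$ that lives in $W$). A standard Young's inequality with a small parameter $\epsilon$ lets me absorb the $\norm{u}_{V\cap V^*}$ factor into the coercive principal part at the cost of a large $C_\epsilon \norm{u}^2$ term. This yields the Gårding inequality
\begin{equation*}
  \operatorname{Re}\ip{L u, u} + \lambda_0 \norm{u}^2 \gtrsim \norm{u}_{V\cap V^*}^2
  \qquad \text{for all } u \in V^k \cap V^*_k,
\end{equation*}
for some sufficiently large real $\lambda_0$.

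Third, the Gårding inequality says precisely that $L + \lambda_0 I : V^k \cap V^*_k \to (V^k \cap V^*_k)'$ is coercive, hence a bounded isomorphism by Lax–Milgram (in its coercive-sesquilinear-form version, treating real and imaginary parts). Writing $L + \lambda I = (L + \lambda_0 I)\bigl[I + (\lambda - \lambda_0)(L + \lambda_0 I)^{-1}\bigr]$, the operator $T := (L + \lambda_0 I)^{-1}$ maps $W^k$ into $V^k \cap V^*_k$, which by the \emph{compactness property} embeds compactly into $W^k$; hence $T$ is compact as a map $W^k \to W^k$. Then $L + \lambda I$ fails to be invertible exactly when $-(\lambda - \lambda_0)^{-1}$ is an eigenvalue of the compact operator $T$, i.e. for a discrete (at most countable) set of $\lambda$, by the analytic Fredholm theorem applied to the holomorphic family $\lambda \mapsto I + (\lambda - \lambda_0)T$; invertibility at $\lambda = \lambda_0$ guarantees it is not everywhere singular.

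The main obstacle I expect is the second step: verifying that every cross term genuinely carries at most \emph{one} unbounded factor, so that it can be controlled by $\norm{u}_{V\cap V^*}\norm{u}$ rather than by the square of the graph norm. The grouping in \eqref{eq:Lw} is arranged so that $l_2$ pairs with $d^* v$ and $l_3^*$ with $d^* v$, while $l_1$ pairs against $dv$ and $l_4 du$ pairs against the zeroth-order $v$; a careful term-by-term inspection confirms that in each case at most one derivative factor appears on the $u$-side, which is what makes the absorption argument close. Once this bookkeeping is done, the rest is the routine Lax–Milgram plus analytic-Fredholm machinery.
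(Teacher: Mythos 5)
Your proposal is correct and follows essentially the same route as the paper: a G{\aa}rding inequality making $L+\gamma I$ coercive on $V^k\cap V^*_k$ (the paper takes the explicit constant $\gamma = 4M^2+M+1/2$ with $M=\max_i\norm{l_i}_{W\rightarrow W}$), compactness of the shifted resolvent via the compactness property, and spectral theory of compact operators (equivalently, your analytic Fredholm family) to conclude that the singular set of $\lambda$ is discrete. One cosmetic remark: the Hodge decomposition is not needed in your first step, since $\ip{L_0u,u}=\norm{du}^2+\norm{d^*u}^2$ already gives coercivity of $L_0+cI$ for any $c>0$ directly.
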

\begin{proof}
  Let $M=\max_{i}\norm{l_i}_{W\rightarrow W}$ and $\gamma = 4M^2+M+1/2$.
  Then $L+\gamma I$ is coercive on $V^k\cap V^*_k$:
  \begin{equation*}
    \ip{Lu,u}+\gamma (u,u) \geq (1/2)[(u,u)+(d^*u,d^*u)+(du,du)].
  \end{equation*}
  The compactness property ensures that
  $I:V^k\cap V^*_k \hookrightarrow (V^k\cap V^*_k)'$ is compact, which
  makes $I(L+\gamma I)^{-1}$ compact on $(V^k\cap V^*_k)'$.  Spectral
  theory then implies that $I+\mu I(L+\gamma I)^{-1}$ has a bounded inverse
  for all $\mu\in\sn{C}$ except at a discrete subset. Then composing with
  the bounded isomorphism $L+\gamma I$ on the right proves the claim.
\end{proof}
In particular, this shows that either $L$ is invertible or $L+\epsilon I$
is invertible for any small enough nonzero $\epsilon$.

Then, we prove the well-posedness of our main problem.
\begin{lemma}
  Suppose $L$ defined in equation~\eqref{eq:Lw} is a bounded isomorphism
  and $(d^* +l_2)L^{-1}(W)\subset V^{k-1}$. Then $L^{-1}(W)\subset D$,
  where $D$ is defined in~\eqref{eq:D}. In particular,
  problem~\eqref{eq:strong} has a unique solution.
\end{lemma}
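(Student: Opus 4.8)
The plan is to show that $u:=L^{-1}f$ lies in $D$ for every $f\in W^k$, where $f$ is regarded as an element of $(V^k\cap V^*_k)'$ through the Gelfand triple. The set $D$ in~\eqref{eq:D} is cut out by two conditions, and the first of them, $(d^*+l_2)u\in V^{k-1}$, is exactly the standing hypothesis $(d^*+l_2)L^{-1}(W)\subset V^{k-1}$. So the entire task reduces to verifying the second condition, $(d+l_1)u\in V^*_{k+1}$.

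To prove $(d+l_1)u\in V^*_{k+1}=D(d^*_{k+1})$, I would use the defining characterization of the adjoint domain: an element $w\in W^{k+1}$ belongs to $D(d^*_{k+1})$ if and only if the linear functional $v\mapsto (w,dv)$ on $V^k$ is bounded in the $W$-norm of $v$. Accordingly, I start from the identity $\ip{Lu,v}=(f,v)$, valid for all $v\in V^k\cap V^*_k$, expand the left side using~\eqref{eq:Lw}, and isolate the single term $((d+l_1)u,dv)$ that pairs $(d+l_1)u$ against $dv$.

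The crux is the term $((d^*+l_2)u,(d^*+l_3^*)v)=(\sigma,d^*v)+(\sigma,l_3^*v)$, where I abbreviate $\sigma:=(d^*+l_2)u$. This is precisely where the hypothesis enters: because $\sigma\in V^{k-1}$ we may apply the adjoint relation $(d\sigma,v)=(\sigma,d^*v)$, and also $(\sigma,l_3^*v)=(l_3\sigma,v)$. After this integration by parts, every term other than $((d+l_1)u,dv)$ is paired against $v$ rather than $dv$, so the identity rearranges to $((d+l_1)u,dv)=(g,v)$ with $g:=f-(d+l_3)\sigma-l_4du-l_5u$. Each summand of $g$ lies in $W^k$ (note $d\sigma\in W^k$ since $\sigma\in V^{k-1}$, and the $l_i$ are bounded on $W$), so $g\in W^k$ and the functional $v\mapsto((d+l_1)u,dv)=(g,v)$ is manifestly $W$-bounded. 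This yields $(d+l_1)u\in V^*_{k+1}$ with $d^*(d+l_1)u=g$, completing the proof that $u\in D$.

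Finally, for the strong form: once $u\in D$, the functional $Lu$ is represented by an element of $W^k$, so $\ip{Lu,v}=(f,v)$ for all $v$ is equivalent to $Lu=f$ in $W^k$, giving existence; uniqueness is immediate from injectivity of the isomorphism $L$. I expect no serious analytic obstacle here, the only real subtlety being the bookkeeping that tracks which pairings land against $v$ and which against $dv$; the regularity hypothesis $(d^*+l_2)L^{-1}(W)\subset V^{k-1}$ is exactly what licenses the one integration by parts on which the argument turns.
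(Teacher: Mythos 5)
Your proof is correct and takes essentially the same route as the paper's: use the hypothesis $\sigma=(d^*+l_2)u\in V^{k-1}$ to integrate the $(\sigma,d^*v)$ pairing by parts, rearrange to $((d+l_1)u,dv)=(g,v)$ with $g=f-(d+l_3)\sigma-l_4du-l_5u\in W^k$, and conclude $(d+l_1)u\in V^*_{k+1}$. The only detail the paper makes explicit that you gloss over is that this identity initially holds only for $v\in V^k\cap V^*_k$ and must be extended to all $v\in V^k$ by a density argument (possible precisely because no $d^*v$ term remains) before the adjoint-domain characterization of $D(d^*_{k+1})$ can be invoked.
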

\begin{proof}
  Since $L$ is already an isomorphism, we only need to show that
  $L^{-1}(W)\subset D$. For any $f\in W$, let $u=L^{-1}f$. Then by
  definition,
  \begin{equation*}
    ((d^*+l_2) u, (d^*+l_3^*) v) + ((d+l_1)u, dv) 
    + (l_4du ,v) + (l_5u, v) = (f,v), \qquad \forall v\in V^k\cap V_k^*.
  \end{equation*}
  By assumption, $(d^* +l_2)u\in V^{k-1}$. Thus, we have,
  \begin{equation*}
    ((d+l_1)u, dv) = 
    (f-(d+l_3)(d^*+l_2) u-l_4du-l_5u, v), \qquad \forall v\in V^k\cap V_k^*.    
  \end{equation*}
  There is no $d^*v$ in the above. By the density of $V^k\cap V_k^*$ in
  $W^k$, we conclude that the above holds for all $v\in V^k$. Hence,
  $(d+l_1)u\in V^*_{k+1}$. Thus $u\in D$ proves the claim.
\end{proof}
We then prove a similar result for the dual problem.  Let $D'$ be the
natural domain on which $L'$ maps $W^k\rightarrow W^k$:
\begin{equation*}
  D':=\{u\in V^k\cap V^*_k \,|\,
  (d^* +l_3^*)u\in V^{k-1},(d+l_4^*)u\in  V^*_{k+1}\}.
\end{equation*}
Using the same argument, we get,
\begin{lemma}
  Suppose $L$ defined in equation~\eqref{eq:Lw} is a bounded isomorphism
  and $(d^* +l_3^*)(L')^{-1}(W)\subset V^{k-1}$. Then
  $(L')^{-1}(W)\subset D'$.
\end{lemma}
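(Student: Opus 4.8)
The plan is to exploit the structural self-duality of the family of operators defined by~\eqref{eq:Lw}: the dual $L'$ is again an operator of the form~\eqref{eq:Lw}, only with the five lower-order terms replaced by a transposed and permuted collection. Once this is recognized, the statement reduces verbatim to the primal lemma just proved, applied to $L'$ in place of $L$.

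First I would record the weak form of $L'$. Since $W$ is identified with its dual and $L'$ is the dual of $L$, we have $\ip{L'z,u}=\ip{Lu,z}$ for all $u,z\in V^k\cap V^*_k$. Expanding the right-hand side through~\eqref{eq:Lw}, moving each lower-order factor onto the other argument via its Hilbert adjoint, and regrouping the terms according to which exterior derivative they carry, I obtain
\[
  \ip{L'z,u} = ((d^*+l_3^*)z,(d^*+l_2)u) + ((d+l_4^*)z,du) + (l_1^*dz,u) + (l_5^*z,u).
\]
This is precisely~\eqref{eq:Lw} for the operator whose lower-order terms are $(l_4^*,l_3^*,l_2^*,l_1^*,l_5^*)$, and it agrees with the operator $L'=(d+l_2^*)(d^*+l_3^*)+(d^*+l_1^*)d+d^*l_4^*+l_5^*$ recorded after Theorem~\ref{thm:stability}. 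Each transposed coefficient is a bounded map between the appropriate (dual) levels, being the adjoint of a bounded map, so $L'$ genuinely lies within the scope of the primal lemma.

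Next I would align the remaining data. Because $L$ is a bounded isomorphism and $V^k\cap V^*_k$ is a Hilbert space, hence reflexive, the dual $L'$ is a bounded isomorphism as well, with $(L')^{-1}=(L^{-1})'$. Comparing the definition of $D'$ with~\eqref{eq:D}, one sees that $D'$ is exactly the domain built from the transposed coefficients: $(d^*+l_3^*)u\in V^{k-1}$ is $(d^*+\tilde l_2)u\in V^{k-1}$ with $\tilde l_2=l_3^*$, and $(d+l_4^*)u\in V^*_{k+1}$ is $(d+\tilde l_1)u\in V^*_{k+1}$ with $\tilde l_1=l_4^*$. Likewise the hypothesis $(d^*+l_3^*)(L')^{-1}(W)\subset V^{k-1}$ is exactly the regularity condition $(d^*+\tilde l_2)(L')^{-1}(W)\subset V^{k-1}$ demanded by the primal lemma. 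With every ingredient matched, that lemma applied to $L'$ yields $(L')^{-1}(W)\subset D'$.

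If one prefers a self-contained argument, one simply repeats the primal proof line for line: for $g\in W$ set $z=(L')^{-1}g$; use $(d^*+l_3^*)z\in V^{k-1}$ to rewrite the first summand of $\ip{L'z,u}=(g,u)$ as $((d+l_2^*)(d^*+l_3^*)z,u)$; observe that no $d^*u$ then survives, so that $u\mapsto((d+l_4^*)z,du)$ coincides with the $W$-bounded functional $u\mapsto(g-(d+l_2^*)(d^*+l_3^*)z-l_1^*dz-l_5^*z,\,u)$ on $V^k\cap V^*_k$; and invoke density to extend this to all test functions, concluding $(d+l_4^*)z\in V^*_{k+1}$ and hence $z\in D'$. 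The only step requiring genuine care, and thus the main obstacle, is the bookkeeping in computing the weak form of $L'$: correctly transposing the five factors and checking that the regrouped expression lands back in the template~\eqref{eq:Lw} with the claimed dualized coefficients. Everything after that point is a transcription of the already-completed primal argument.
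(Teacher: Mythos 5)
Your proposal is correct and follows essentially the paper's own route: the paper proves this lemma simply by remarking that ``the same argument'' as in the primal lemma applies, which is exactly your reduction of $L'$ to an operator of the form~\eqref{eq:Lw} with the dualized coefficients $(l_4^*,l_3^*,l_2^*,l_1^*,l_5^*)$, the hypothesis $(d^*+l_3^*)(L')^{-1}(W)\subset V^{k-1}$ playing the role of the primal regularity condition and $D'$ that of $D$. Your explicit computation of the weak form of $L'$ and the line-by-line transcription of the primal argument merely spell out what the paper leaves implicit.
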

\subsection{Well-posedness for the mixed form}
We then turn to mixed system~\eqref{eq:m}.  Its associated bilinear form
$B:(V^{k-1}\times V^k)^2\rightarrow \sn{R}$ is:
\begin{multline}
  \label{eq:bf}
  B((\sigma,u),(\tau,v)):=(\sigma,\tau)+(du,dv)+
  (d\sigma,v)-(u,d\tau)\\
  -(l_2u,\tau)+(l_3\sigma,v)+(l_1u,dv)+(l_4du,v)+(l_5u,v).
\end{multline}
We call this bilinear form \emph{well-posed} if and only for any
$(g,f)\in V'_{k-1}\times V'_k$, there exists a unique solution
$(\sigma,u)\in V^{k-1}\times V^k$ satisfying:
\begin{equation*}
  B((\sigma,u),(\tau,v)) = \ip{g,\tau}_{V'\times V}
  + \ip{f,v}_{V'\times V}, \qquad
  \forall (\tau,v) \in V^{k-1}\times V^k.
\end{equation*}
From the discussion after equation~\ref{eq:m}, we see that the
well-posedness of $B$ implies that $L^{-1}(W)\in D$. But it also implies
the well-posedness of the dual mixed problem: given any
$(g,f)\in V'_{k-1}\times V'_k$, find $(\xi, z)\in V^{k-1}\times V^k$
satisfying
\begin{equation*}
  B((\rho,w),(\xi,z)) = \ip{g,\rho}_{V'\times V}
  + \ip{f,w}_{V'\times V}, \qquad
  \forall (\rho,w) \in V^{k-1}\times V^k.  
\end{equation*}
A similar argument shows that the well-posedness of $B$ implies
$(L')^{-1}(W)\in D'$ as well.  We collect these results in a lemma:
\begin{lemma}
  Suppose $B$ defined in~\eqref{eq:bf} is well-posed and let $L$ be defined
  as in~\eqref{eq:Lw}. Then $L$ is a bounded isomorphism,
  $L^{-1}(W)\subset D$, and $(L')^{-1}(W)\subset D'$.
\end{lemma}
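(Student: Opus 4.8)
The plan is to exploit the exact equivalence between the mixed system~\eqref{eq:m} and the primal problem~\eqref{eq:strong} already sketched in the discussion following~\eqref{eq:m}, and to combine it with the standard Banach--Ne\v{c}as--Babu\v{s}ka fact that well-posedness of a bilinear form is equivalent to well-posedness of its transpose. The whole argument is really the bijection between solutions, pushed through once for $L$ and once for its transpose $L'$.

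First I would make the solution correspondence precise: for data $(g,f)=(0,f)$, a pair $(\sigma,u)\in V^{k-1}\times V^k$ solves~\eqref{eq:m} if and only if $u\in D$, $\sigma=(d^*+l_2)u$, and $Lu=f$. The forward implication is the computation after~\eqref{eq:m}: testing~\eqref{eq:m1} against $\tau\in V^{k-1}$ shows $u\in V^*_k$ with $d^*u=\sigma-l_2u$, hence $\sigma=(d^*+l_2)u\in V^{k-1}$; testing~\eqref{eq:m2} against $v\in V^k$ and noting that no $d^*v$ occurs shows $(d+l_1)u\in V^*_{k+1}$, so $u\in D$, and reading off the strong form gives $Lu=f$. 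The converse is the short verification that, once $u\in D$ and $\sigma=(d^*+l_2)u$, the $\tau$-terms in~\eqref{eq:m1} cancel while~\eqref{eq:m2} reduces to $\ip{Lu,v}=\ip{f,v}$ through the integration-by-parts identity $\ip{Lu,v}=B((\sigma,u),(0,v))$, valid for $v\in V^k$ precisely because $\sigma\in V^{k-1}$ lets one replace $(\sigma,d^*v)$ by $(d\sigma,v)$.

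Granting this bijection, existence of a mixed solution for every $f$ produces $u\in D$ with $Lu=f$, and uniqueness of the mixed solution gives uniqueness of the primal solution; together with the evident boundedness of $L$ from~\eqref{eq:Lw} this yields $L^{-1}(W)\subset D$ and surjectivity of $L$ onto $W$. To upgrade this to a bounded isomorphism $V^k\cap V^*_k\to(V^k\cap V^*_k)'$ I would invoke the transpose. Well-posedness of $B$ is equivalent, by Banach--Ne\v{c}as--Babu\v{s}ka, to invertibility of the induced operator $\mathcal B:V^{k-1}\times V^k\to V'_{k-1}\times V'_k$, hence of $\mathcal B^{\mathsf T}$, i.e.\ to well-posedness of the dual mixed problem. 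Injectivity of $L$ then follows from the dense range of $L'$: if $Lu=0$ then $\ip{L'z,u}=\ip{Lu,z}=0$ for all $z\in V^k\cap V^*_k$, so $u$ annihilates the (dense) range of $L'$ and hence $u=0$ by reflexivity; the inf-sup bound inherited from $\mathcal B$ together with the dense range then gives surjectivity onto the full dual.

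Finally, the dual statement $(L')^{-1}(W)\subset D'$ is obtained by repeating the first two steps verbatim for the transposed form $B^{\mathsf T}$. Reading $B^{\mathsf T}((\rho,w),(\xi,z))=\ip{f,w}$ as an equation for $(\xi,z)$, the $\rho$-equation forces the dual variable to be $(d^*+l_3^*)z$ (up to sign), and the $w$-equation collapses to $\ip{L'z,w}=\ip{f,w}$, with the coefficient roles $l_2,l_1$ replaced by $l_3^*,l_4^*$ exactly as in the definitions of $L'$ and $D'$. The main obstacle I anticipate is bookkeeping rather than conceptual: tracking domains so that each weak identity upgrades to the claimed membership ($u\in V^*_k$, $\sigma\in V^{k-1}$, $(d+l_1)u\in V^*_{k+1}$), reconciling the nested data spaces $W\subset V'_k\subset(V^k\cap V^*_k)'$ so that the mixed solution operator really furnishes the inverse of $L$ on $W$, and keeping the sign conventions from the skew-symmetric coupling in~\eqref{eq:bf} consistent when identifying $B^{\mathsf T}$ with $L'$.
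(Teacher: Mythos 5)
Your reduction of the mixed system~\eqref{eq:m} to the primal problem and back, and your identification of the transposed form with $L'$ (the $\rho$-equation forcing $\xi=-(d^*+l_3^*)z$, the $w$-equation collapsing to $\ip{L'z,w}=\ip{f,w}$, with $l_2,l_1$ passing to $l_3^*,l_4^*$ as in $D'$) is exactly the paper's route: its ``proof'' of this lemma is nothing more than the discussion after~\eqref{eq:m} together with the standard fact that well-posedness of $B$ is inherited by its transpose, yielding $L^{-1}(W)\subset D$ and $(L')^{-1}(W)\subset D'$ precisely as you describe. Your injectivity argument for $L$ is also a correct fleshing-out: pairing a putative $Lu=0$ against the dual solutions with data $g\in W$ gives $(g,u)=\ip{L'z,u}=\ip{Lu,z}=0$ for all $g$, hence $u=0$, using that $W$ is dense in $(V^k\cap V^*_k)'$.

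The genuine gap is the surjectivity/bounded-inverse step, where you assert an ``inf-sup bound inherited from $\mathcal B$.'' No such inheritance is available. For a general $u\in V^k\cap V^*_k$ the natural companion $\sigma=(d^*+l_2)u$ lies only in $W^{k-1}$, not in $V^{k-1}$, so $(\sigma,u)$ is not an admissible trial pair for $B$; moreover the inf-sup condition for $B$ controls $V$-norms, whereas $L$ must be inverted in the intersection norm of $V^k\cap V^*_k$, and data in $(V^k\cap V^*_k)'\setminus V'_k$ cannot even be inserted into the mixed problem. So well-posedness of $B$ does not directly yield $\norm{u}_{V\cap V^*}\lesssim\norm{Lu}_{(V\cap V^*)'}$, which is what closed range requires. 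The repair is already in the paper: by Lemma~\ref{lem:almost_good}, $L+\gamma I$ is coercive on $V^k\cap V^*_k$, and the compactness property makes the inclusion $V^k\cap V^*_k\hookrightarrow(V^k\cap V^*_k)'$ compact, so $L$ is a compact perturbation of an isomorphism and hence Fredholm of index zero; your injectivity argument then \emph{alone} gives the bounded isomorphism, with no separate surjectivity step. (Alternatively, note $d^*z=-\xi-l_3^*z$ gives $\norm{z}_{V\cap V^*}\lesssim\norm{g}$ for the dual solution, whence $\norm{u}_{W}\lesssim\norm{Lu}_{(V\cap V^*)'}$ by duality, and the G{\aa}rding inequality upgrades this to the full a priori bound, after which density of $W$ in the range closes the argument.) As written, your claim of an inherited inf-sup would not survive scrutiny, even though every other step of your proposal is sound and matches the paper.
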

Moreover, the converse is also true.
\begin{lemma}
  \label{lem:mwp}
  Suppose $L$ defined in~\eqref{eq:Lw} is a bounded isomorphism. Then $B$
  defined in~\eqref{eq:bf} is well-posed if and only if
  condition~\eqref{eq:as} holds.
\end{lemma}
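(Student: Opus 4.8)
The plan is to prove the two implications separately, leaning on the lemmas already established in this subsection. Throughout, write $\mathcal{A}\colon X\to X'$ for the bounded operator associated with $B$ on $X=V^{k-1}\times V^k$, so that well-posedness of $B$ is the statement that $\mathcal{A}$ is an isomorphism.

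The forward implication is essentially in hand. The preceding lemma shows that well-posedness of $B$ forces $L$ to be a bounded isomorphism with $L^{-1}(W)\subset D$ and $(L')^{-1}(W)\subset D'$. Unwinding the definition~\eqref{eq:D} of $D$, the inclusion $L^{-1}(W)\subset D$ says precisely $(d^*+l_2)L^{-1}(W)\subset V^{k-1}$, and likewise $(L')^{-1}(W)\subset D'$ gives $(d^*+l_3^*)(L')^{-1}(W)\subset V^{k-1}$; together these are condition~\eqref{eq:as}. So for this direction nothing further is required.

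For the converse, assume $L$ is an isomorphism and that \eqref{eq:as} holds; by the two lemmas just established we then also have $L^{-1}(W)\subset D$ and $(L')^{-1}(W)\subset D'$. I would verify well-posedness of $B$ through the Banach--Ne\v{c}as--Babu\v{s}ka framework: boundedness (routine from boundedness of the $l_i$ and of $d$), injectivity of $\mathcal{A}$ and of its transpose $\mathcal{A}'$, and an inf-sup bound. Injectivity is the easy half and needs only that $L$, hence $L'$, is injective, and does not use \eqref{eq:as}: if $(\sigma,u)$ is a homogeneous solution, testing \eqref{eq:m1} over all $\tau\in V^{k-1}$ forces $u\in V_k^*$ with $\sigma=(d^*+l_2)u\in V^{k-1}$; substituting into \eqref{eq:m2} and using $(d\sigma,v)=(\sigma,d^*v)$ for $v\in V^k\cap V_k^*$ collapses the second equation to $\ip{Lu,v}=0$ for all such $v$, whence $u=0$ and $\sigma=0$. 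The identical computation applied to the transposed form, with the roles of $l_2$ and $l_3^*$ interchanged, gives injectivity of $\mathcal{A}'$ from injectivity of $L'$; in particular the range of $\mathcal{A}$ is dense.

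The substantive step is the inf-sup estimate $\norm{\sigma}_V+\norm{u}_V\lesssim \sup_{(\tau,v)\in X} B((\sigma,u),(\tau,v))/(\norm{\tau}_V+\norm{v}_V)$, which together with dense range yields surjectivity and hence the isomorphism property for all data in $V'_{k-1}\times V'_k$. I would assemble the test functions in stages. Testing with $(\sigma,u)$ controls $\norm{\sigma}$ and $\norm{du}$ up to lower-order cross terms, the decisive point being the cancellation $(d\sigma,u)-(u,d\sigma)=0$. Since $d\sigma\in\B^k\subset\z^k\subset V^k$ with $d(d\sigma)=0$, testing with $(0,d\sigma)$ isolates $\norm{d\sigma}^2$ modulo lower-order terms, so $\norm{d\sigma}$ is controlled once $\norm{u}$, $\norm{\sigma}$, $\norm{du}$ are. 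The remaining and decisive quantity is $\norm{u}$, bounded by the duality argument where the isomorphism $L'$ and the regularity \eqref{eq:as} enter: given $u$, set $z=(L')^{-1}u$, so that \eqref{eq:as} makes $\xi:=(d^*+l_3^*)z$ an admissible test function in $V^{k-1}$. For a pair already satisfying \eqref{eq:m1}, i.e.\ with $u\in V_k^*$ and $\sigma=(d^*+l_2)u$, one checks the exact identity $B((\sigma,u),(\xi,z))=\norm{u}^2$, by moving $d^*$ off $u$ via $\xi\in V^{k-1}$ and regrouping $((d+l_3)\sigma,z)=(\sigma,\xi)$; since $\norm{\xi}_V+\norm{z}_V\lesssim\norm{u}$, this supplies the missing control of $\norm{u}$.

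The main obstacle is precisely this duality step, and it is where \eqref{eq:as} is indispensable. The difficulty is that $L$ lives on $V^k\cap V_k^*$ whereas the mixed unknown $u$ ranges over the larger space $V^k$, so $u$ cannot be paired directly against $L'z$; the reconciliation is effected only by feeding the codifferential $\xi=(d^*+l_3^*)z$ of the dual solution back through \eqref{eq:m1}, which demands $\xi\in V^{k-1}$, exactly the dual half of \eqref{eq:as}. The bookkeeping that must be handled carefully is the passage from pairs satisfying \eqref{eq:m1} to arbitrary $(\sigma,u)\in X$ (the non-slaved component of $\sigma$ being controlled by the $(\sigma,\tau)$ block) and the fact that the data lie in $V'_{k-1}\times V'_k$ rather than in $W$; I would treat the latter by proving the inf-sup and non-degeneracy first, both insensitive to the data space, and then invoking the abstract theorem rather than constructing solutions from $W$-data. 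The symmetric use of the first half of \eqref{eq:as} yields the analogous inf-sup for the transposed form, covering the dual mixed problem.
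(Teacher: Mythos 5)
Your proposal is correct and follows essentially the same route as the paper's proof: forward implication read off from the preceding lemmas; converse via a G{\aa}rding-type estimate with test functions $(\sigma,u)$ and $(0,d\sigma)$, a dual solve $z=(L')^{-1}(\,\cdot\,u)$ whose codifferential is an admissible test function in $V^{k-1}$ precisely by \eqref{eq:as}, and a transposed-side condition, concluding with Babu\v{s}ka's theorem. The one substantive deviation is the sign of the auxiliary test function, and it is where the paper is cleaner. The paper takes $\xi=-(d^*+l_3^*)z$ with $z=cKu$; with this sign all $\rho$-terms in $B((\rho,w),(\xi,z))$ cancel, since $(\rho,\xi)+(d\rho,z)+(l_3\rho,z)=(\rho,\xi+(d^*+l_3^*)z)=0$, so the identity $B((\rho,w),(\xi,z))=(cu,w)$ holds for \emph{every} $(\rho,w)\in V^{k-1}\times V^k$, and $(\xi,z)$ can simply be added to the G{\aa}rding test pair $(\sigma,u+ad\sigma)$ --- no slaved-pair bookkeeping ever arises. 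With your choice $\xi=+(d^*+l_3^*)z$, the identity $B((\sigma,u),(\xi,z))=\norm{u}^2$ holds, as you say, only on the subspace of pairs satisfying \eqref{eq:m1}; for a general pair a direct computation gives the exact residual $B((\sigma,u),(\xi,z))=\norm{u}^2+2B((\sigma,u),(\xi,0))$. So the gap you flagged is real but closes in one line: either absorb the residual into the inf-sup supremum using $\norm{\xi}_V\lesssim\norm{u}$ (which follows from \eqref{eq:as} and the closed graph theorem, via $\norm{d(d^*+l_3^*)K}_{W\rightarrow W}<\infty$, as the paper also uses), or equivalently flip the sign of $\xi$, which turns your identity into the paper's universal one. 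Two minor points: your separate injectivity check for $\mathcal{A}$ is redundant once the inf-sup bound is established; and where you propose a symmetric inf-sup for the transpose, the paper instead verifies only the weaker non-degeneracy condition, taking $u=L^{-1}v$ and $\sigma=(d^*+l_2)u$ (admissible by the first half of \eqref{eq:as}) to get $B((\sigma,u),(\tau,v))=(v,v)>0$ --- both are legitimate ways to invoke the abstract theorem.
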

\begin{proof}
  The only if part is clear from the previous lemmas. We only need to show
  the if part.

  First, we show that $B$ satisfies a G{\aa}rding-like inequality: there
  exist positive constants $a,b,c$ depending only on
  $\norm{l_i}_{W\rightarrow W}$ such that
  \begin{equation}
    \label{eq:garding}
    B((\sigma,u),(\sigma,u+ad\sigma)) 
    \geq b(\norm{\sigma}_V+\norm{u}_V)^2-c\norm{u}^2,
    \qquad
    \forall (\sigma,u)\in V^{k-1}\times V^k.
  \end{equation}
  Direct computation using Cauchy-Schwarz inequality shows that there exist
  constants $c_1,c_2$ depending only on $\norm{l_i}_{W\rightarrow W}$ such
  that
  \begin{gather*}
    B((\sigma,u),(\sigma,u))\geq 
    (1/2)(\norm{\sigma}^2+\norm{du}^2)-c_1\norm{u}^2,\\
    B((\sigma,u),(0,d\sigma))\geq 
    (1/2)\norm{d\sigma}^2-c_2(\norm{\sigma}^2+\norm{du}^2+\norm{u}^2).
  \end{gather*}
  Multiplying the second inequality by any positive $a<1/(2c_2)$ and adding it
  to the first inequality, we get the claim.
  
  Second, fix any $(\sigma,u)\in V^{k-1}\times V^k$. We solve a dual
  problem using $u$ as data: let $z=cKu$ and $\xi=-(d^*+l_3^*)z$. By
  assumption, $\xi\in V^{k-1}$. Direct computation shows that
  \begin{equation*}
    B((\rho, w), (\xi,z)) = (cu, w), \qquad \forall 
    (\rho, w)\in V^{k-1}\times V^k.
  \end{equation*}

  Finally, we add $(\xi,z)$ to our choice of test functions in the first
  step and get
  \begin{equation*}
    B((\sigma,u),(\sigma+\xi,u+ad\sigma+z)) 
    \geq b(\norm{\sigma}_V+\norm{u}_V)^2.
  \end{equation*}
  Further, from the definition of $(\xi, z)$, we have
  \begin{equation*}
    \norm{\sigma+\xi}_V+\norm{u+ad\sigma+z}_V
    \leq M\norm{\sigma}_V+\norm{u}_V,
  \end{equation*}
  where the constant $M$ depends only on $a$,
  $\norm{d(d^*+l_3^*)K}_{W\rightarrow W}$, and
  $\norm{dK}_{W\rightarrow W}$.  Thus $B$ satisfies the inf-sup
  condition. Similarly, for any fixed nontrivial
  $(\tau, v)\in V^{k-1}\times V^k$, we let $u=L^{-1}v$ and
  $\sigma = (d^*+l_2)u$. By assumption, $\sigma\in V^{k-1}$. Direct
  computation shows that
  \begin{equation*}
    B((\sigma, u),(\tau, v))=(v,v)>0.
  \end{equation*}
  It is well-known that these two conditions imply that $B$ is
  well-posed~\cite{Babuska1970}.
\end{proof}
Given these lemmas, Theorem~\ref{thm:wp} is clearly true.

\section{Discrete Stability through new projections}
\label{sec:stability}
In this section, we prove Theorem~\ref{thm:stability}. The idea of the
proof is similar to the that of the if part of Lemma~\ref{lem:mwp}. First,
due to the subcomplex property, the estimate~\eqref{eq:garding} still holds
at the discrete level with the same constants $a,b,c$. Second, fix any
$(\sigma,u)\in V^{k-1}_h\times V^k_h$. We can again solve a dual problem
using $u$ as data: let $z=cKu$ and $\xi=-(d^*+l_3^*)z$. Then we have
$\xi\in V^{k-1}$, $\norm{\xi}_V+\norm{z}_V\lesssim \norm{u}$ independent of
$h$, and
\begin{equation*}
  B((\rho, w), (\xi,z)) = (cu, w), \qquad \forall 
  (\rho, w)\in V^{k-1}\times V^k.
\end{equation*}
But we can no longer add $(\xi,z)$ to our choice of test functions because
$(\xi,z)$ is not discrete. In the rest of this section, we construct a
discrete pair $(\xi_h,z_h)\in V^{k-1}_h\times V^k_h$ such that
\begin{equation}
  \label{eq:dds}
  \begin{gathered}
    \norm{\xi_h}_V+\norm{z_h}_V\lesssim \norm{\xi}_V+\norm{z}_V
    \text{ uniformly in $h$, and}\\
    |B((\rho,w),(\xi-\xi_h,z-z_h))| \leq
    \epsilon_h(\norm{\rho}_V+\norm{w}_V)(\norm{\xi}_V+\norm{z}_V),
  \end{gathered}
\end{equation}
for all $(\rho, w)\in V^{k-1}_h\times V^k_h$, where
$\epsilon_h\rightarrow 0$ as $h\rightarrow 0$. Given such a pair, we can
add it to our choice of test functions:
\begin{equation*}
  B((\sigma,u),(\sigma+\xi_h,u+ad\sigma+z_h))
  \geq (b-c\epsilon_h)(\norm{\sigma}_V+\norm{u}_V)^2.
\end{equation*}
Further, there also exists $M>0$ bounded uniformly in $h$, such that
\begin{equation*}
  \norm{\sigma+\xi_h}_V+\norm{u+ad\sigma+z_h}_V
  \leq M(\norm{\sigma}_V+\norm{u}_V).
\end{equation*}
Choose a sufficiently small $h_0$ such that $\epsilon_{h_0}<b/c$.  Then for
all $h<h_0$, the bilinear form $B((\sigma,u),(\tau,v))$ satisfies the
inf-sup condition on $V^{k-1}_h\times V^k_h$ with the inf-sup constant
bounded uniformly below by $(b-c\epsilon_{h_0})/M$. Since
$V^{k-1}_h\times V^k_h$ is of finite dimension, this establishes the
well-posedness. Thus Theorem~\ref{thm:stability} is proved.

An obvious choice for $(\xi_h,z_h)$ in~\eqref{eq:dds} is the elliptic
projection given by
\begin{equation*}
  B((\rho,w),(\xi_h,z_h))=B((\rho,w),(\xi,z)), \qquad
  \forall (\rho, w)\in V^{k-1}_h\times V^k_h.
\end{equation*}
Then $\epsilon_h=0$. But since we have not proved the well-posedness of $B$
on the discrete level, we neither know a discrete solution exists nor can
we show the uniform estimates. The next most obvious choice is obtained
using the elliptic projection of the unperturbed problem:
\begin{equation*}
  B_0((\rho,w,p),(\xi_h,z_h,q_h))=B_0((\rho,w),(\xi,z)), \qquad
  \forall (\rho, w,p)\in V^{k-1}_h\times V^k_h\times \h^k_h.
\end{equation*}
Then we have the existence and uniform bounds. But the second estimate
in~\eqref{eq:dds} fails. In what follows, we develop two new projection
operators to correct the elliptic projection for the unperturbed problem so
that both conditions in~\eqref{eq:dds} holds. In fact, we do a lot
more. Our elaborately chosen $(\xi_h,z_h)$ will not only
satisfy~\eqref{eq:dds}, but also have explicit and optimal error rates in
quantities like $\norm{\xi-\xi_h}, \norm{z-z_h}$, and
$|B((\rho,w),(\xi-\xi_h,z-z_h)|$. This is made precise in
Theorem~\ref{thm:ddt}. This result also form the basis of the improved
error estimates later. Moreover, the two new projection operators enjoy
many properties making them interesting in their own right.

\subsection{Generalized Canonical Projection}
Thus far, we have three projections in FEEC: the orthogonal projection
$P_h$, the cochain projection $\pi_h$ which commutes with $d$ but has no
orthogonality property, and the elliptic projection $K_{0h}P_hL_0$ which
misses the harmonic part and is only well-defined on the subspace
$D(L_0)$. Here, we introduce a new projection operator
$\Pi_h:V^k\rightarrow V^k_h$ given by
\begin{equation*}
  \Pi_h:=P_{\z_h}+d^*_hK_{0h}P_hd.
\end{equation*}
In the above and for the rest of this paper, for any subspace $X$ of $W$, we
use the notation $P_X:W\rightarrow X$ for the $W$-orthogonal projection. Among
other properties, this $\Pi_h$ satisfies a commutative property generalizing
that of the canonical projection for classical elements like Raviart--Thomas.
\begin{theorem}
  \label{thm:Pih}
  Suppose $(W,d)$ is a Hilbert complex satisfying the compactness property
  and $V^k_h$ are dense discrete subcomplexes admitting $W$-bounded cochain
  projections. Then $\Pi_h$ is a projection uniformly bounded in the
  $V$-norm. Further $d\Pi_h=P_{\B_h}d$. Let $\eta_0,\alpha_0$ be defined as
  in equation~\eqref{eq:approx0}.  Then, for any $w\in V$,
  \begin{equation*}
    \norm{\Pi_hw}\lesssim \norm{w}+\eta_0\norm{dw}, \qquad
    \norm{(I-\Pi_h)w} \lesssim \norm{(I-\pi_h)w}+\eta_0 \norm{dw}.
  \end{equation*}
  Moreover, it satisfies ``partial orthogonality'': for any $w,v\in V$,
  \begin{equation*}
    |((I-\Pi_h)w,v)|\lesssim 
    (\norm{(I-\pi_h)w}+\eta_0\norm{dw})(\norm{(I-\pi_h)v}+\eta_0\norm{dv})
    +\alpha_0\norm{dv}\norm{dw}.
  \end{equation*}
\end{theorem}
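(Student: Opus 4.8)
The plan is to treat $\Pi_h$ as the discrete mimic of the identity written in Hodge-decomposed form. Since $w=P_\z w+P_{\B^*}w$ and $P_{\B^*}=d^*K_0d=d^*dK_0$ by \eqref{eq:hodge}, the operator $\Pi_h=P_{\z_h}+d^*_hK_{0h}P_hd$ is obtained from $I=P_\z+d^*K_0d$ by replacing $P_\z$ with $P_{\z_h}$ and $d^*K_0$ with its Galerkin approximation $d^*_hK_{0h}P_h$. First I would record the commuting relation. Because $P_{\z_h}w\in\z_h=\ker d$ and, by the subcomplex property, $dw\in V_h^{k+1}$ so that $P_hdw$ may be replaced by $dw$ inside the discrete Hodge decomposition \eqref{eq:hodgeh}, the term $dd^*_hK_{0h}P_hdw$ is exactly the $\B_h$-component of $P_hdw$; together with $P_{\B_h}dw=P_{\B_h}P_hdw$ (as $\B_h\subset V_h$) this yields $d\Pi_h=P_{\B_h}d$. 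To see that $\Pi_h$ is a projection it then suffices to check $\Pi_h|_{V_h}=I$, since $\Pi_h$ maps into $\z_h\oplus\B^*_h=V_h^k$; for $v\in V_h$ one uses \eqref{eq:hodgeh} together with the commutativity $dK_{0h}=K_{0h}d$ (which follows from $dL_{0h}=L_{0h}d$ on the subcomplex) to identify $d^*_hK_{0h}dv$ with the $\B^*_h$-component of $v$ and $P_{\z_h}v$ with its $\B_h\oplus\h_h$-part. Uniform $V$-boundedness is then immediate from the norm bound below and from $\norm{d\Pi_h w}=\norm{P_{\B_h}dw}\le\norm{dw}$.

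For the two norm estimates I would lean on the operator estimates \eqref{eq:est-op}. The bound $\norm{d^*K_0-d^*_hK_{0h}P_h}_{W\to W}\lesssim\eta_0$ gives $\norm{d^*_hK_{0h}P_hdw}\lesssim\norm{d^*K_0dw}+\eta_0\norm{dw}$, and the commutation $K_0d=dK_0$ together with \eqref{eq:hodge} identifies $d^*K_0dw=d^*dK_0w=P_{\B^*}w$, whose norm is at most $\norm{w}$; adding $\norm{P_{\z_h}w}\le\norm{w}$ proves $\norm{\Pi_hw}\lesssim\norm{w}+\eta_0\norm{dw}$. The second estimate follows from the projection property: since $\Pi_h\pi_h=\pi_h$ one has $(I-\Pi_h)w=(I-\Pi_h)(I-\pi_h)w$, and applying the first bound to $(I-\pi_h)w$ while using $d(I-\pi_h)w=(I-\pi_h)dw$ gives $\norm{(I-\Pi_h)w}\lesssim\norm{(I-\pi_h)w}+\eta_0\norm{dw}$.

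The delicate part is the partial orthogonality, and the right starting point is the exact splitting
\[
(I-\Pi_h)w=(I-P_{\z_h})P_\z w+(d^*K_0-d^*_hK_{0h}P_h)dw,
\]
obtained from $I=P_\z+d^*K_0d$. Pairing with $v$ splits the estimate into $T_1=((I-P_{\z_h})P_\z w,v)$ and $T_2=((d^*K_0-d^*_hK_{0h}P_h)dw,v)$. For $T_1$ I would exploit $\z_h\subset\z$: then $P_{\z_h}P_\z=P_{\z_h}$ and $P_{\z_h}P_{\B^*}=0$, so all $\B^*$ contributions drop out and, using self-adjointness and idempotency of $I-P_{\z_h}$, $T_1$ collapses to $((I-P_{\z_h})P_\z w,(I-P_{\z_h})P_\z v)$. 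Since $\pi_h$ preserves cycles ($\pi_h P_\z w\in\z_h$), $P_{\z_h}$ being the best approximation in $\z_h$ gives $\norm{(I-P_{\z_h})P_\z w}\le\norm{(I-\pi_h)P_\z w}$, and writing $P_\z w=w-P_{\B^*}w=w-d^*K_0dw$ bounds this by $\norm{(I-\pi_h)w}+\eta_0\norm{dw}$ via the definition \eqref{eq:approx0} of $\eta_0$. Thus $T_1$ is controlled by the first product on the right-hand side.

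The heart of the argument, and the step I expect to be the main obstacle, is $T_2$, where a naive Cauchy--Schwarz bound only yields the factor $\eta_0$ and not the required $\alpha_0$. Splitting $v=\pi_hv+(I-\pi_h)v$, the rough part $((d^*K_0-d^*_hK_{0h}P_h)dw,(I-\pi_h)v)$ is harmless, bounded by $\eta_0\norm{dw}\,\norm{(I-\pi_h)v}$. For the discrete part I would move $d^*$ and $d^*_h$ onto $\pi_hv$ using the adjoint relations and the cochain property $d\pi_h=\pi_hd$, turning $((d^*K_0-d^*_hK_{0h}P_h)dw,\pi_hv)$ into $((K_0-K_{0h}P_h)dw,\pi_hdv)$; the improved operator estimate $\norm{K_0-K_{0h}P_h}_{W\to W}\lesssim\alpha_0$ from \eqref{eq:est-op}, together with the uniform boundedness of $\pi_h$, then bounds this by $\alpha_0\norm{dw}\norm{dv}$. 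Recognizing that it is precisely this passage from $d^*K_0$ to $K_0$ (trading one order of $d$ for the sharper $\alpha_0$-rate) that produces the final term $\alpha_0\norm{dv}\norm{dw}$, and carefully tracking the cancellation of the $\B^*$-parts in $T_1$, is the subtle bookkeeping; everything else reduces to Cauchy--Schwarz and the definitions in \eqref{eq:approx0}.
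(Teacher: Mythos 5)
Your proposal is correct and follows essentially the same route as the paper's proof: the identical exact splitting $(I-\Pi_h)w=(P_{\z}-P_{\z_h})w+(d^*K_0-d^*_hK_{0h}P_h)dw$, the collapse of the $\z$-part to the symmetric product $((I-P_{\z_h})P_{\z}w,(I-P_{\z_h})P_{\z}v)$ (which the paper compresses into ``bounded as before''), and the key adjoint shift pairing $(d^*K_0-d^*_hK_{0h}P_h)dw$ with $\pi_hv$ to obtain $((K_0-K_{0h}P_h)dw,\pi_h dv)$ and the $\alpha_0$ rate from \eqref{eq:est-op}. The only departures are cosmetic and valid: you deduce uniform $V$-boundedness from the norm bounds plus $\norm{d\Pi_hw}=\norm{P_{\B_h}dw}\le\norm{dw}$ rather than citing discrete stability of \eqref{eq:m0}, and you derive the second norm estimate from the projection identity $(I-\Pi_h)=(I-\Pi_h)(I-\pi_h)$ instead of the paper's direct splitting into $(P_{\z}-P_{\z_h})w$ and $(P_{\B^*}w-P_{\B^*_h}\Pi_hw)$.
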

\begin{proof}
  The stability of the unperturbed discrete problem~\eqref{eq:m0} implies
  that $\Pi_h$ is uniformly bounded in the $V$-norm. By subcomplex
  property, we have $P_hdv = dv$ for $v\in V_h$. Thus,
  \begin{equation*}
    \Pi_h v = P_{\z_h}v+d^*_hK_{0h}dv=P_{\z_h}v+P_{\B^*_h}v = v,
  \end{equation*}
  showing that $\Pi_h$ is a projection. We know from
  equation~\eqref{eq:hodgeh} that for the unperturbed problem
  $dd_h^*K_{0h}=P_{\B_h}$. This proves that
  $d\Pi_h=dd_h^*K_{0h}P_hd=P_{\B_h}d$. We then prove the first two
  estimates.  Fix any $w\in V$. We split
  $w-\Pi_hw=(P_\z-P_{\z_h})w + (P_{\B^*}w-P_{\B^*_h}\Pi_hw)$. The second
  term can be bounded using the error estimates \eqref{eq:est-op} for
  $K_{0h}$:
  \begin{equation*}
    \norm{P_{\B^*}w-P_{\B^*_h}\Pi_hw} = \norm{d^*K_{0}dw-d^*_hK_{0h}P_hdw}
    \lesssim \eta_0 \norm{dw}.
  \end{equation*}
  We then deal with the first term. The subcomplex property ensures
  $\z_h\subset \z$ and the cochain property of $\pi_h$ ensures
  $\pi_h\z\subset \z_h$. These two lead to
  \begin{equation*}
    \norm{(P_\z-P_{\z_h})w} \lesssim \norm{(I-\pi_h)P_{\z}w}.
  \end{equation*}
  But $(I-\pi_h)P_{\z}w=(I-\pi_h)w-(I-\pi_h)P_{\B^*}w$ and
  $P_{\B^*}=d^*K_0d$. Thus,
  \begin{equation*}
    \norm{(P_\z-P_{\z_h})w}
    \lesssim \norm{(I-\pi_h)w}+\norm{(I-\pi_h)d^*K_0dw}
    \lesssim \norm{(I-\pi_h)w}+\eta_0\norm{dw}.
  \end{equation*}
  Combining the estimates for the two parts, we get,
  \begin{equation*}
    \norm{(I-\Pi_h)w} = 
    \norm{(P_\z-P_{\z_h})w + (P_{\B^*}w-P_{\B^*_h}\Pi_hw)}
    \lesssim 
    \norm{(I-\pi_h)w}+\eta_0\norm{dw}.
  \end{equation*}
  By triangle inequality, we get
  \begin{equation*}
    \norm{\Pi_hw} 
    \leq \norm{w} + \norm{(I-\Pi_h)w}
    \lesssim \norm{w}+\eta_0\norm{dw}
  \end{equation*}
  as well. This proves the first two estimates.  Finally, for any
  $w,v\in V$, we have
  \begin{multline*}
    ((I-\Pi_h)w, v)=((P_\z-P_{\z_h})w,v) 
    +((P_{\B^*}-P_{\B^*_h}\Pi_h)w,(I-\pi_h)v)\\
    +((P_{\B^*}-P_{\B^*_h}\Pi_h)w,\pi_hv).
  \end{multline*}
  The first two terms can be bounded as before. The last term is bounded by
  \begin{equation*}
    (d^*K_{0}dw-d^*_hK_{0h}P_hdw, \pi_hv)
    =((K_{0}-K_{0h}P_h)dw, \pi_hdv)
    \lesssim\alpha_0\norm{dw}\norm{dv},
  \end{equation*}
  where the error estimate \eqref{eq:est-op} is used again. This finishes
  the proof.
\end{proof}

\subsection{Modified elliptic projection}
We modify the unperturbed elliptic projection slightly to accommodate the
harmonic forms.
\begin{theorem}
  \label{thm:mep}
  For any $z\in D(L_0)$, let $z_h=K_{0h}P_h L_0z+P_{\h_h}P_{\h}z$.  Then,
  \begin{gather*}
    \norm{z-z_h}\lesssim \alpha_0\norm{L_0z}, 
    \qquad 
    \norm{d(z-z_h)}+\norm{d^*z-d^*_hz_h}\lesssim \eta_0\norm{L_0z},\\
    \norm{P_h(d^*dz+dd^*z)-(d^*_hdz_h+dd^*_hz_h)}\leq \mu_0\norm{L_0z}.
  \end{gather*}
\end{theorem}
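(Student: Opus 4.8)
The plan is to reduce everything to the continuous and discrete solution operators $K_0$ and $K_{0h}$, and then to invoke the operator estimates \eqref{eq:est-op} together with the discrete Hodge decomposition \eqref{eq:hodgeh}. The starting point is the identity $z = K_0 L_0 z + P_\h z$, valid for every $z\in D(L_0)$: indeed $K_0 L_0 z$ is the unique solution of the unperturbed Hodge Laplacian with data $L_0 z$ that is orthogonal to $\h$, and since $L_0 P_\h z = 0$ and $z-P_\h z\perp\h$, this solution is exactly $z-P_\h z$. Subtracting the definition of $z_h$ then yields the clean splitting
\begin{equation*}
  z-z_h = (K_0-K_{0h}P_h)L_0 z + (I-P_{\h_h})P_\h z.
\end{equation*}

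For the first estimate I would bound the two terms separately: the first is controlled directly by $\norm{K_0-K_{0h}P_h}_{W\rightarrow W}\lesssim\alpha_0$ from \eqref{eq:est-op}, while the harmonic term is handled by the FEEC harmonic-gap bound $\norm{(I-P_{\h_h})P_\h}\lesssim\mu_0\le\alpha_0$. For the derivative estimate the situation is cleaner, because the harmonic contribution drops out entirely: both $\h$ and $\h_h$ lie in $\ker d\cap\ker d^*_{(h)}$, the statement for $\h_h$ following from $\h_h\subset\z_h$ together with $\h_h\perp\B_h$, which forces $d^*_h P_{\h_h}=0$. Hence $dz=dK_0L_0z$, $dz_h=dK_{0h}P_hL_0z$, $d^*z=d^*K_0L_0z$, and $d^*_hz_h=d^*_hK_{0h}P_hL_0z$, so that $\norm{d(z-z_h)}+\norm{d^*z-d^*_hz_h}$ equals $\norm{(dK_0-dK_{0h}P_h)L_0z}+\norm{(d^*K_0-d^*_hK_{0h}P_h)L_0z}$, which is $\lesssim\eta_0\norm{L_0z}$ by the second line of \eqref{eq:est-op}.

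For the third estimate I would first recognize that $d^*dz+dd^*z=L_0z$, so the continuous quantity is $P_hL_0z$, whereas the discrete quantity is the discrete Hodge Laplacian $d^*_hdz_h+dd^*_hz_h$ applied to $z_h$. Using \eqref{eq:hodgeh} in the form $(dd^*_h+d^*_hd)K_{0h}=I-P_{\h_h}$ on $V_h$, together with $(dd^*_h+d^*_hd)P_{\h_h}=0$, gives $(dd^*_h+d^*_hd)z_h=(I-P_{\h_h})P_hL_0z$; note that the added harmonic piece $P_{\h_h}P_\h z$ is annihilated here, so it causes no trouble. Therefore $P_hL_0z-(d^*_hdz_h+dd^*_hz_h)=P_{\h_h}P_hL_0z=P_{\h_h}L_0z$, since $\h_h\subset V_h$. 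The crucial observation is now that $L_0z=dd^*z+d^*dz\in\B\oplus\B^*$ is orthogonal to $\h$. Writing $q_h=P_{\h_h}L_0z\in\h_h$ and using $L_0z\perp\h$, one gets $\norm{q_h}^2=(L_0z,q_h)=(L_0z,(I-P_\h)q_h)\le\norm{L_0z}\,\norm{(I-P_\h)q_h}$, and the harmonic-gap estimate $\norm{(I-P_\h)q_h}\le\mu_0\norm{q_h}$ for $q_h\in\h_h$ then yields $\norm{P_{\h_h}L_0z}\le\mu_0\norm{L_0z}$.

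The main obstacle is this last harmonic-gap estimate, which is precisely where the mismatch between the continuous space $\h$ and the discrete space $\h_h$ enters and where $\mu_0$ is designed to intervene. I would establish it by the standard route: for $q_h\in\h_h\subset\z$ one has $(I-P_\h)q_h=P_\B q_h$, and approximating $P_\B q_h$ by an element of $\B_h$ through the cochain projection $\pi_h$ (using $q_h\perp\B_h$ and $\pi_h q_h=q_h$) reduces its norm to a multiple of $\norm{(I-\pi_h)P_\h q_h}\le\mu_0\norm{q_h}$; equivalently one may invoke the symmetry of the gap between the equidimensional subspaces $\h$ and $\h_h$ (valid for $h$ small) to transfer the elementary bound $\norm{(I-P_{\h_h})P_\h}\lesssim\mu_0$. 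Once this gap bound is in hand, all three estimates follow by combining the splitting above with \eqref{eq:est-op} and \eqref{eq:hodgeh}; the remaining work is routine bookkeeping with the orthogonality relations of the Hodge decomposition.
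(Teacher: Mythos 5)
Your proof is correct and follows essentially the same route as the paper: the identical splitting $z-z_h=(K_0-K_{0h}P_h)L_0z+(I-P_{\h_h})P_\h z$, the operator estimates \eqref{eq:est-op} for the first two bounds (with the harmonic parts killed by $dP_\h=d^*P_\h=0$ and $dP_{\h_h}=d^*_hP_{\h_h}=0$, and the harmonic term controlled through $P_{\h_h}P_\h=P_{\z_h}P_\h$ and $\pi_h\z\subset\z_h$ by $\mu_0$), and the same reduction of the third quantity to $\norm{P_{\h_h}L_0z}$ via the discrete Hodge decomposition \eqref{eq:hodgeh}. The only (minor) difference is the last step: where the paper bounds $\norm{P_{\h_h}L_0z}$ by citing the $p$-estimate of \eqref{eq:werr0}, you prove it directly from $L_0z\perp\h$ together with the gap bound $\norm{(I-P_\h)q_h}\le\mu_0\norm{q_h}$ for $q_h\in\h_h$, which is in effect an inlined proof of that cited estimate and even yields the stated constant $\mu_0$ exactly.
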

\begin{proof}
  By equation~\eqref{eq:hodge}, we have $P_{\h^\perp}z=K_0L_0z$. Thus we
  have the splitting
  \begin{equation*}
    z-z_h=(P_{\h^\perp}z-P_{\h^\perp_h}z_h) + (P_{\h}z-P_{\h_h}z_h)=
    (K_0-K_{0h}P_h)L_0z+(I-P_{\h_h})P_{\h}z.
  \end{equation*}
  The first term has been estimated by \eqref{eq:est-op}. For the second
  term, since $P_{\h_h}P_{\h}=P_{\z_h}P_{\h}$ and $\pi_h\z\subset\z_h$, we
  have
  \begin{equation*}
    \norm{(I-P_{\h_h})P_{\h}z}_W\leq \norm{(I-\pi_h)P_{\h}z}_W
    \leq \norm{(I-\pi_h)P_{\h}}_{W\rightarrow W}\norm{z}= \mu_0\norm{z},
  \end{equation*}
  which proves the first estimate.  The second estimate follows from
  \eqref{eq:est-op} directly. Finally for the last estimate, we use the
  continuous and discrete Hodge decomposition \eqref{eq:hodge}
  \eqref{eq:hodgeh}, we get
  \begin{equation*}
    (d^*_hd+dd^*_h)z_h = 
    (d^*_hd+dd^*_h)K_{0h}P_hL_0z = (P_{\B_h}+P_{\B^*_h})P_hL_0z.
  \end{equation*}
  Moreover, by definition, $P_{\h}L_0z=0$. Thus,
  \begin{equation*}
    P_h(d^*dz+dd^*z-d^*_hdz_h-dd^*_hz_h)
    =P_hL_0z-(P_{\B_h}+P_{\B^*_h})P_hL_0z=P_{\h_h}L_0z.
  \end{equation*}
  The right-hand side is just $\norm{p-p_0}$ for the unperturbed problem
  with $L_0z$ as data. By \eqref{eq:werr0},
  \begin{equation*}
    \norm{P_{\h_h}L_0z} = \norm{(P_{\h_h}-P_{\h})L_0z} 
    \lesssim 0+\mu_0\norm{P_{\B}L_0z}\leq \mu_0\norm{L_0z},
  \end{equation*}
  which proves the last estimate.
\end{proof}

\subsection{Projection of the dual solution}
We are now ready to construct the discrete pair $(\xi_h,z_h)$ satisfying
the conditions~\eqref{eq:dds} in the proof of the discrete stability
theorem. In fact, we prove a stronger result where the first variable
$\rho$ is allowed to be in $V$ instead of $V_h$ and derive explicit error
estimates.
\begin{theorem}
  \label{thm:ddt}
  Under the assumption of Theorem~\ref{thm:stability}, for any $g \in W^k$,
  let $z=Kg$, $\xi=-(d^*+l_3^*)z$, $z_h=K_{0h}P_hL_0z+P_{\h_h}P_{\h}z$, and
  $\xi_h=-d^*_hz_h-\Pi_hl_3^*z$. Then,
  \begin{equation*}
    \norm{z-z_h}\lesssim \alpha_0\norm{g}, \qquad 
    \norm{d(z-z_h)} \lesssim \eta_0 \norm{g},
    \qquad \norm{\xi-\xi_h}\lesssim \eta\norm{g}.
  \end{equation*}
  Further, for any $(\rho,w)\in V^{k+1}\times V^k_h$, we have,
  \begin{equation*}
    |B((\rho,w),(\xi-\xi_h,z-z_h))| \lesssim
    [\eta\norm{\rho}
    +\alpha\norm{d\rho}
    +(\mu_0+\chi_{123}\eta+\chi_5\alpha)\norm{w}
    +\chi_4\alpha\norm{dw}]\norm{g}.
  \end{equation*}
\end{theorem}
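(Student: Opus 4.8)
The plan is to build everything on the two projection theorems already proved, Theorem~\ref{thm:mep} for $z_h=K_{0h}P_hL_0z+P_{\h_h}P_\h z$ and Theorem~\ref{thm:Pih} for $\Pi_h$, invoking the hypotheses of Theorem~\ref{thm:stability} only through two consequences. First, since $\norm{(dd^*+d^*d)K}_{W\to W}$ is bounded, $z=Kg$ lies in $D(L_0)$ with $\norm{L_0z}\lesssim\norm{g}$; combined with the assumption $(d^*+l_3^*)(L')^{-1}(W)\subset V^{k-1}$ this also places $d^*z,l_3^*z\in V^{k-1}$ and $dz\in V^*_{k+1}$, so every expression below is well-defined. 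This is exactly what turns each $\norm{L_0z}$ on the right of Theorem~\ref{thm:mep} into $\norm{g}$. Second, the boundedness of $dl_3^*K$ gives $\norm{dl_3^*z}\lesssim\norm{g}$. With these, the first two displayed estimates are immediate from Theorem~\ref{thm:mep}. For the third I split $\xi-\xi_h=-(d^*z-d_h^*z_h)-(I-\Pi_h)l_3^*z$: the first summand is $\lesssim\eta_0\norm{g}$ by Theorem~\ref{thm:mep}, and for the second I use the first bound of Theorem~\ref{thm:Pih} together with $\norm{(I-\pi_h)l_3^*z}\lesssim\norm{(I-P_h)l_3^*K}\norm{g}\le\delta\norm{g}$ and $\eta_0\norm{dl_3^*z}\lesssim\eta_0\norm{g}$, both $\le\eta\norm{g}$ by~\eqref{eq:approx}.

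For the bilinear-form bound I expand $B((\rho,w),(\xi-\xi_h,z-z_h))$ from~\eqref{eq:bf} and separate the terms carrying the first test slot $\rho$ from those carrying $w$; the asymmetry of the two slots is essential, since I will use that $w$ is \emph{discrete} while $\rho$ may lie in the full space. Because $(\xi,z)$ is the exact dual mixed solution, the $\rho$-part of $B((\rho,w),(\xi,z))$ vanishes identically, $(\rho,\xi)+(d\rho,z)+(l_3\rho,z)=-(\rho,d^*z)+(d\rho,z)=0$; subtracting this and inserting $\xi_h=-d_h^*z_h-\Pi_hl_3^*z$ leaves exactly $(\rho,d_h^*z_h-d^*z)+(d\rho,z-z_h)$ together with the two $l_3$-terms $(l_3\rho,z-z_h)$ and $(\rho,(I-\Pi_h)l_3^*z)$. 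The first group is $\lesssim(\eta_0\norm{\rho}+\alpha_0\norm{d\rho})\norm{g}$ by Theorem~\ref{thm:mep} and $(d\rho,z)=(\rho,d^*z)$. The $l_3$-terms are where the generalized canonical projection earns its keep: the first is crude Cauchy--Schwarz, and the second is controlled by the \emph{partial orthogonality} of $\Pi_h$ (the last estimate of Theorem~\ref{thm:Pih}) applied with arguments $l_3^*z$ and $\rho$, yielding $\lesssim(\eta\norm{\rho}+\alpha\norm{d\rho})\norm{g}$ after absorbing $\eta\eta_0\le\eta^2\le\alpha$. Thus the $\rho$-terms produce precisely the $\eta\norm{\rho}+\alpha\norm{d\rho}$ of the claim, with no dependence on which $l_i$ are present.

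The $w$-terms are where the indicators $\chi$ appear, and the key maneuver is to treat $(dw,dv)$ and $(w,d(\xi-\xi_h))$ \emph{together} rather than by separate Cauchy--Schwarz, which would produce a spurious $\eta_0\norm{dw}$ not allowed by the table. Since $w\in V_h^k$ and $dz\in V^*_{k+1}$, the continuous and discrete adjoint relations give $(dw,d(z-z_h))=(w,d^*dz-d_h^*dz_h)$, while $d\Pi_h=P_{\B_h}d$ turns $d(I-\Pi_h)l_3^*z$ into $(I-P_{\B_h})dl_3^*z$; combining,
\begin{equation*}
  (dw,d(z-z_h))-(w,d(\xi-\xi_h))=\bigl(w,L_0z-(d_h^*d+dd_h^*)z_h\bigr)+\bigl(w,(I-P_{\B_h})dl_3^*z\bigr).
\end{equation*}
Because $w$ is discrete, the first pairing equals $(w,P_{\h_h}L_0z)\lesssim\mu_0\norm{w}\norm{g}$ via the harmonic estimate of Theorem~\ref{thm:mep}, giving the \emph{unconditional} $\mu_0\norm{w}$; and since $\pi_hdl_3^*z\in\B_h$ one has $\norm{(I-P_{\B_h})dl_3^*z}\le\norm{(I-\pi_h)dl_3^*z}\lesssim\norm{(I-P_h)dl_3^*K}\norm{g}\le\eta\norm{g}$, giving the $\chi_3\eta\norm{w}$. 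The four remaining $w$-terms $(l_2w,\xi-\xi_h)$, $(l_1w,d(z-z_h))$, $(l_5w,z-z_h)$, $(l_4dw,z-z_h)$ are direct Cauchy--Schwarz against the three norm estimates, contributing $\chi_2\eta\norm{w}$, $\chi_1\eta\norm{w}$, $\chi_5\alpha\norm{w}$, and $\chi_4\alpha\norm{dw}$, each with its own indicator since it vanishes when the corresponding $l_i$ does. Summing reproduces $(\mu_0+\chi_{123}\eta+\chi_5\alpha)\norm{w}+\chi_4\alpha\norm{dw}$.

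The main obstacle is exactly this third step: obtaining the sharp coefficients in the $w$-terms, above all the reduction of $(dw,d(z-z_h))-(w,d(\xi-\xi_h))$ from its naive $\eta_0\norm{dw}$ bound down to the harmonic-defect level $\mu_0\norm{w}$ plus the single $\chi_3\eta\norm{w}$. This forces one to abandon Cauchy--Schwarz in favor of the discrete orthogonality engineered into $z_h$ and $\Pi_h$—the identity $d_h^*dz_h+dd_h^*z_h=(P_{\B_h}+P_{\B^*_h})P_hL_0z$ from the proof of Theorem~\ref{thm:mep} and the commuting relation $d\Pi_h=P_{\B_h}d$—and to attribute every surviving contribution to a single lower-order term so that $\chi_{123}$, $\chi_4$, $\chi_5$ emerge exactly as stated. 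The $\rho$-terms, by contrast, hinge only on the already-available partial orthogonality of $\Pi_h$; everything else is bookkeeping with Cauchy--Schwarz and the definitions~\eqref{eq:approx}.
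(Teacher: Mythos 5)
Your proof is correct and follows the paper's own route: the three norm estimates come from Theorems~\ref{thm:mep} and~\ref{thm:Pih} exactly as in the paper's proof, and your expansion of the bilinear form---in particular treating $(dw,d(z-z_h))$ together with $-(w,d(\xi-\xi_h))$ so that $d\Pi_h=P_{\B_h}d$ and the harmonic-defect estimate of Theorem~\ref{thm:mep} reduce the pair to $(w,P_{\h_h}L_0z)+(w,(I-P_{\B_h})dl_3^*z)$, avoiding a spurious $\eta_0\norm{dw}$---is precisely the ``direct computation'' the paper leaves implicit, carried out with the ingredients it cites. The only cosmetic deviation is your appeal to the partial-orthogonality estimate of Theorem~\ref{thm:Pih} for the term $(\rho,(I-\Pi_h)l_3^*z)$, where plain Cauchy--Schwarz with the already-established bound $\norm{(I-\Pi_h)l_3^*z}\lesssim\eta\norm{g}$ suffices.
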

\begin{proof}
  Using the regularity assumption that $\norm{L_0K}_{W\rightarrow W}$ is
  bounded, the estimate for $\norm{z-z_h}$ and $\norm{d(z-z_h)}$ follows
  directly from Theorem~\ref{thm:mep}. From the same theorem, for $\xi$, we
  have
  \begin{equation*}
    \norm{\xi-\xi_h}\leq \norm{d^*z-d^*_hz_h}+\norm{(I-\Pi_h)l_3^*z}
    \lesssim \eta_0\norm{g} +\norm{(I-\Pi_h)l_3^*Kg}.
  \end{equation*}
  For the second term, using quantities defined in \eqref{eq:approx}, we
  have:
  \begin{equation*}
    \norm{(I-\Pi_h)l_3^*Kg}
    \lesssim \norm{(I-\pi_h)l_3^*Kg}+\eta\norm{dl_3^*Kg}
    \lesssim \eta\norm{g}.
  \end{equation*}
  The last estimate is just a direct computation using the error estimates
  in Theorem~\ref{thm:mep}, quantities defined in \eqref{eq:approx}, and
  the Cauchy-Schwarz inequality.
\end{proof}
For the proof of Theorem~\ref{thm:stability} at the beginning of this
section, we get $(\xi_h,z_h)$ by applying this theorem to
$g=cu\in V_h\subset W$. We note that $(\xi,z)$ here is the same as the one
defined there. We check that condition~\eqref{eq:dds} is satisfied. First,
\begin{gather*}
  \norm{z_h}_V=\norm{K_{0h}P_hL_0z+P_{\h_h}P_{\h}z}_V\lesssim \norm{z}_V,\\
  \norm{\xi_h}_V=\norm{-d^*_hK_{0h}P_hL_0z-\Pi_hl_3^*z}_V\lesssim
  \norm{d\xi}+\norm{z}_V\lesssim \norm{\xi}_V+\norm{z}_V,
\end{gather*}
where the constants depend only on the stability constant of the continuous
and discrete unperturbed problem, which is either independent of $h$ or
bounded uniformly in $h$. Second, as mentioned before, the compactness
assumptions and density, $\alpha, \eta, \mu_0\rightarrow 0$ as
$h\rightarrow 0$. Thus condition~\eqref{eq:dds} is verified. This finishes
the proof of Theorem~\ref{thm:stability}.

\section{Proof of Improved Error Estimates}
\label{sec:improved_est}
In this section, we prove Theorem~\ref{thm:improved_est}. To make the
notation more compact, we use $e_u:=u-u_h$ and
$E_u=(I-\pi_h)u$. Corresponding quantities for $\sigma$ are similarly
defined.  The Galerkin orthogonality equation reads:
\begin{subequations}
  \label{eq:ge}
  \begin{align}
    \label{eq:ge1}&
    (e_\sigma, \tau) - (e_u, d\tau) - (l_2e_u,\tau) = 0,
    &&\forall \tau\in V_h^{k-1}, \\
    \label{eq:ge2}&
    ((d+l_3)e_\sigma,v)+((d+l_1)e_u,dv)
    +(l_4de_u,v)+(l_5e_u,v)=0,&&\forall v\in V_h^k.
  \end{align}
\end{subequations}

\subsection{Preliminary estimates for 
$\norm{de_\sigma}$ and $\norm{de_u}$}
Optimal estimates for these two terms can be obtained directly from the
error equations \eqref{eq:ge} with carefully chosen test functions.
\begin{lemma}
  \label{lem:pdsigma}
  For any $(\sigma,u)$ solving \eqref{eq:m} and $(\sigma_h,u_h)$ solving
  its Galerkin projection,
  \begin{equation*}
    \norm{de_\sigma}\lesssim 
    \norm{dE_\sigma}+\chi_3\norm{e_\sigma}
    +\chi_4 \norm{de_u}+\chi_5\norm{e_u}.
  \end{equation*}
\end{lemma}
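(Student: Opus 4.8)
The plan is to isolate $\norm{de_\sigma}$ by splitting it into a best-approximation piece, which is already acceptable, and a fully discrete piece that I can feed into the second error equation~\eqref{eq:ge2}. Concretely, I would write $de_\sigma = dE_\sigma + \phi$, where $\phi := d(\pi_h\sigma - \sigma_h) = \pi_h d\sigma - d\sigma_h$. Using the cochain property $d\pi_h = \pi_h d$ one checks $dE_\sigma = (I-\pi_h)d\sigma$, so this really is an identity, and since $\pi_h\sigma - \sigma_h \in V^{k-1}_h$ the subcomplex property gives $\phi \in \B_h \subset V^k_h$. The whole point of this decomposition is the two structural facts it provides: $\phi$ is an admissible discrete test function, and $\phi$ lies in the range of $d$, so $d\phi = 0$.

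The key step is then to expand $\norm{de_\sigma}^2 = (de_\sigma, dE_\sigma) + (de_\sigma, \phi)$ and evaluate the second inner product using~\eqref{eq:ge2} with $v = \phi$. That equation reads $(de_\sigma,\phi) + (l_3 e_\sigma,\phi) + (de_u, d\phi) + (l_1 e_u, d\phi) + (l_4 de_u,\phi) + (l_5 e_u,\phi) = 0$. Because $d\phi = 0$, the two principal contributions of $e_u$, namely $(de_u, d\phi)$ and $(l_1 e_u, d\phi)$, vanish identically, leaving $(de_\sigma,\phi) = -(l_3 e_\sigma,\phi) - (l_4 de_u,\phi) - (l_5 e_u,\phi)$. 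This is exactly the mechanism that makes $\norm{de_\sigma}$ depend only on the lower-order terms $l_3, l_4, l_5$ (hence the factors $\chi_3,\chi_4,\chi_5$) and not on $l_1$ or $l_2$.

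To finish, I would apply Cauchy--Schwarz to each surviving term to get $|(de_\sigma,\phi)| \lesssim (\chi_3\norm{e_\sigma} + \chi_4\norm{de_u} + \chi_5\norm{e_u})\norm{\phi}$, then bound $\norm{\phi} \le \norm{de_\sigma} + \norm{dE_\sigma}$ by the triangle inequality. Writing $A = \norm{de_\sigma}$, $B = \norm{dE_\sigma}$, and $C = \chi_3\norm{e_\sigma} + \chi_4\norm{de_u} + \chi_5\norm{e_u}$, the combined bound takes the form $A^2 \lesssim AB + C(A+B)$, and a routine Young's inequality absorbs the $A^2$ on the right to yield $A \lesssim B + C$, which is the claimed estimate. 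I do not expect a serious obstacle here; the only genuinely delicate point is recognizing the decomposition $de_\sigma = dE_\sigma + \phi$ with $\phi \in \B_h$, since it is precisely the choice $d\phi = 0$ that suppresses the high-order $e_u$ terms and keeps the estimate sharp. The rest is bookkeeping with Cauchy--Schwarz and Young.
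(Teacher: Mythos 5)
Your argument is correct and is essentially the paper's own proof: both hinge on testing \eqref{eq:ge2} against elements of $\B_h$, where $dv=0$ wipes out the $(de_u,dv)$ and $l_1$ contributions, and on bounding the remaining non-discrete piece by $\norm{dE_\sigma}$ through the commuting projection (i.e.\ $\pi_h$ maps $\B$ into $\B_h$). The paper packages the same computation more compactly as the projection identity $P_{\B_h}(de_\sigma+l_3e_\sigma+l_4de_u+l_5e_u)=0$ followed by the orthogonal splitting $de_\sigma=(I-P_{\B_h})de_\sigma+P_{\B_h}de_\sigma$, which yields the estimate as a direct identity and spares your Young's-inequality absorption; your explicit test function $\phi=d(\pi_h\sigma-\sigma_h)$ is an equivalent realization of the same idea.
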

\begin{proof}
  Restricting the test function space to $\B_h$ in equation \eqref{eq:ge2}
  leads to:
  \begin{equation}
    \label{eq:pbhdsigma}
    P_{\B_h}(de_\sigma+l_3e_\sigma+l_4de_u+l_5e_u)=0.
  \end{equation}
  Thus,
  $de_\sigma=(I-P_{\B_h})de_\sigma+P_{\B_h}de_\sigma
  =(I-P_{\B_h})de_\sigma-P_{\B_h}(l_3e_\sigma+l_4de_u+l_5e_u)$.
  Because $\pi_h$ maps to $\B$ to $\B_h$, we have
  $\norm{(I-P_{\B_h})de_\sigma}\lesssim
  \norm{(I-\pi_h)de_\sigma}=\norm{dE_\sigma}$ proving the claim.
\end{proof}
\begin{lemma}
  \label{lem:pdu}
  For any $(\sigma,u)$ solving \eqref{eq:m} and $(\sigma_h,u_h)$ solving
  its Galerkin projection,
  \begin{equation*}
    \norm{de_u}\lesssim \norm{dE_u}+\eta\norm{de_\sigma}
    +\chi_{145}\norm{e_u}+\chi_3\norm{e_\sigma}.
  \end{equation*}
\end{lemma}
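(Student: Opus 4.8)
The plan is to reduce the estimate to a bound on the discrete error $\phi := \pi_h u - u_h \in V_h^k$. Writing $e_u = E_u + \phi$ with $E_u = (I-\pi_h)u$, the triangle inequality gives $\norm{de_u}\le\norm{dE_u}+\norm{d\phi}$, so it suffices to control $\norm{d\phi}$. Since $\phi\in V_h^k$, I split it by the discrete Hodge decomposition and set $\psi := P_{\B^*_h}\phi$, so that $d\psi = d\phi$ and the discrete Poincar\'e inequality (a consequence of the stable discrete complex) yields $\norm{\psi}\lesssim\norm{d\psi}=\norm{d\phi}$ uniformly in $h$. I then insert $v=\psi$ into the second error equation~\eqref{eq:ge2}. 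Because $(de_u,d\psi)=(de_u,d\phi)=(dE_u,d\phi)+\norm{d\phi}^2$, rearranging isolates
\[
\norm{d\phi}^2 = -(dE_u,d\phi)-(de_\sigma,\psi)-(l_1 e_u,d\phi)-(l_3 e_\sigma,\psi)-(l_4 de_u,\psi)-(l_5 e_u,\psi).
\]

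The first term is $\le\norm{dE_u}\norm{d\phi}$, and the genuinely lower-order pairings $(l_1 e_u,d\phi)$, $(l_3 e_\sigma,\psi)$, $(l_5 e_u,\psi)$ are each dispatched by Cauchy--Schwarz together with $\norm{\psi}\lesssim\norm{d\phi}$, producing the $\chi_1\norm{e_u}$, $\chi_3\norm{e_\sigma}$ and $\chi_5\norm{e_u}$ contributions (all times $\norm{d\phi}$). The term $(de_\sigma,\psi)$ is where the factor $\eta$ has to surface, and I would extract it by partial orthogonality. Since $de_\sigma$ is closed, $\Pi_h de_\sigma = P_{\z_h}de_\sigma\in\z_h$, which is orthogonal to $\psi\in\B^*_h$; hence $(de_\sigma,\psi)=((I-\Pi_h)de_\sigma,\psi)$. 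Applying Theorem~\ref{thm:Pih} with $w=de_\sigma$ (so $dw=0$) and $v=\psi$ (so $(I-\pi_h)\psi=0$, as $\psi\in V_h$) collapses the bound to $\eta_0\norm{(I-\pi_h)de_\sigma}\norm{d\psi}$; finally $\norm{(I-\pi_h)de_\sigma}\lesssim\norm{de_\sigma}$ and $\eta_0\le\eta$ give $|(de_\sigma,\psi)|\lesssim\eta\norm{de_\sigma}\norm{d\phi}$, exactly the sought $\eta\norm{de_\sigma}$.

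I expect the remaining term $(l_4 de_u,\psi)$ to be the main obstacle. The naive estimate $\chi_4\norm{de_u}\norm{\psi}\lesssim\chi_4(\norm{dE_u}+\norm{d\phi})\norm{d\phi}$ reproduces $\norm{de_u}$ on the right, and after dividing by $\norm{d\phi}$ it leaves a bare $\chi_4\norm{de_u}$ that the clean statement does not permit; a direct absorption of the resulting $\chi_4\norm{d\phi}^2$ into the left side is illegitimate when $\norm{l_4}$ is large, and coupling with Lemma~\ref{lem:pdsigma} does not remove the standalone term either, since the cross term carries no vanishing factor. To land the stated $\chi_{145}\norm{e_u}$ I would instead try to re-express $l_4 de_u$ through the $\B_h$-restricted identity~\eqref{eq:pbhdsigma}, which controls $P_{\B_h}(l_4 de_u)$ by $de_\sigma$ and the lower-order terms, together with the regularity/compactness hypotheses on $l_4$ encoded in~\eqref{eq:approx} (notably $P_\B l_4^*K$), so that the pairing against the coexact field $\psi$ is governed by $\norm{e_u}$ rather than by $\norm{de_u}$.

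Assuming the $l_4$ term is handled, the remaining step is routine: I would collect the six bounds, divide through by $\norm{d\phi}$, and use $\norm{de_u}\le\norm{dE_u}+\norm{d\phi}$ to conclude. Pinning down precisely why the $l_4$ contribution degrades only to $\norm{e_u}$ --- even though $\sigma$, and hence $de_\sigma$, carries no apparent dependence on $l_4$ --- is the delicate point emphasized in the introduction, and it is where I would concentrate the effort.
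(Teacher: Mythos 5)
Your setup coincides with the paper's own proof: the paper also tests \eqref{eq:ge2} with $v_h = P_{\B^*_h}(\pi_h u - u_h)$, handles the $l_1$, $l_3$, $l_5$ pairings by the discrete Poincar\'e inequality exactly as you do, and extracts the factor $\eta$ from $(de_\sigma, v_h)$ --- there by comparing $v_h$ with $v = P_{\B^*}v_h = d^*K_0\,dv_h$ and using $\norm{(I-\pi_h)d^*K_0\,dv_h}\lesssim \eta_0\norm{dv_h}$. Your variant, writing $(de_\sigma,v_h)=((I-\Pi_h)de_\sigma,v_h)$ since $\Pi_h de_\sigma = P_{\z_h}de_\sigma \perp v_h$, and then invoking the partial orthogonality of Theorem~\ref{thm:Pih} with $dw=0$ and $(I-\pi_h)v_h=0$, is a correct alternative route to the same $\eta\norm{de_\sigma}\norm{dv_h}$ bound.

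The genuine gap is the $l_4$ term, which you explicitly leave unresolved, and the repair you sketch is a dead end: since $v_h\in\B^*_h$ is $W$-orthogonal to $\B_h$, the identity \eqref{eq:pbhdsigma} constrains only $P_{\B_h}(l_4 de_u)$, which pairs to zero against $v_h$, and no compactness or regularity hypothesis on $l_4$ enters this lemma at all. The paper's resolution is elementary, and it is Cauchy--Schwarz applied the opposite way from your other terms: split $l_4 de_u = l_4 dE_u + l_4 dv_h$ and bound $|(l_4 dv_h, v_h)|\lesssim \norm{dv_h}\,\norm{v_h}$, estimating the second factor \emph{not} by the discrete Poincar\'e inequality (that is precisely what manufactures your non-absorbable $\chi_4\norm{dv_h}^2$) but in the plain $W$-norm, $\norm{v_h}\le\norm{\pi_h u - u_h}\le \norm{E_u}+\norm{e_u}$. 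The term is then linear in $\norm{dv_h}$, so dividing through requires no absorption and yields the contribution $\chi_4(\norm{dE_u}+\norm{E_u}+\norm{e_u})$. (Strictly speaking this leaves an extra $\chi_4\norm{E_u}$ which the lemma's statement suppresses; it is harmless and resurfaces inside the $\chi_{1345}E(u)$ term of Theorem~\ref{thm:improved_est}.) So the puzzle you flag --- why $l_4$ degrades only to $\norm{e_u}$ --- has a prosaic answer: keep the exterior derivative on the factor $dv_h$, which appears linearly, and measure the test function itself in $W$ by the error in $u$ rather than by its differential.
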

\begin{proof}
  Let $v_h=P_{\B_h^*}(\pi_hu-u_h)$ in the second error equation
  \eqref{eq:ge2}. We have
  \begin{equation}
    \label{eq:pdum}
    (de_u,dv_h) = 
    -(de_\sigma,v_h)-[(l_1e_u,dv_h)+(l_3e_\sigma+l_5e_u,v_h)]-(l_4de_u,v_h).
  \end{equation}
  By discrete Poincar\'e inequality $\norm{v_h}\lesssim \norm{dv_h}$ for
  $v_h\in \B_h^*$, the second term in \eqref{eq:pdum} becomes
  \begin{equation*}
    |(l_1e_u,dv_h)+(l_3e_\sigma+l_5e_u,v_h)|
    \lesssim (\chi_{15}\norm{e_u}+\chi_3\norm{e_\sigma})\norm{dv_h}.
  \end{equation*}
  Because $de_u=d(u-\pi_hu+\pi_hu-u_h)=dE_u+dv_h$, the last term in
  \eqref{eq:pdum} satisfies
  \begin{equation*}
    |(l_4de_u, v_h)| = |(l_4dE_u,v_h)+(l_4dv_h,v_h)| 
    \lesssim (\norm{dE_u}+\norm{v_h})\norm{dv_h}.
  \end{equation*}
  We finally estimate the first term in \eqref{eq:pdum}.  Let
  $v=P_{\B^*}v_h$.  Then $d(\pi_hv-v_h) = 0$ implies $\pi_hv-v_h\in \z_h$,
  so $(v_h-v)\perp (\pi_h v-v_h)$.  Thus,
  \begin{equation*}
    \norm{v-v_h}\leq \norm{(I-\pi_h)v}
    =\norm{(I-\pi_h)d^*Kd v_h} \lesssim \eta\norm{dv_h}.
  \end{equation*}
  This implies,
  \begin{equation*}
    |(de_\sigma, v_h)| = |(de_\sigma, v-v_h)|
    \lesssim \eta\norm{de_\sigma}\norm{dv_h}.
  \end{equation*}
  Combining all these estimates and
  $\norm{de_u}\leq \norm{dE_u}+\norm{dv_h}$ gives the estimate in the
  claim.
\end{proof}

\subsection{Duality lemma}
The optimal $W$-norm estimates for $\norm{e_u}$ and $\norm{e_\sigma}$
require more work.
\begin{lemma}
  \label{lem:duality}
  Under the assumptions of Theorem~\ref{thm:improved_est}, for any
  $g\in W$,
  \begin{equation*}
    |(\Pi_h e_u,g)|\lesssim
    [\eta\norm{e_\sigma} + \alpha\norm{de_\sigma}
    +(\mu_0+\chi_{123}\eta+\chi_5\alpha)\norm{e_u}
    +(\mu_0\eta+\chi_{12345}\alpha)\norm{de_u}]\norm{g}.
  \end{equation*}
\end{lemma}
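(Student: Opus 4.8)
The plan is to run an Aubin--Nitsche duality argument against the dual solution operator $K=(L')^{-1}$, using exactly the discrete pair constructed in Theorem~\ref{thm:ddt}. Given $g\in W$, set $z=Kg$, $\xi=-(d^*+l_3^*)z$, and let $(\xi_h,z_h)=(-d^*_hz_h-\Pi_hl_3^*z,\,K_{0h}P_hL_0z+P_{\h_h}P_{\h}z)$ be the pair of Theorem~\ref{thm:ddt}. The defining property of $K$, established in the proof of Lemma~\ref{lem:mwp}, gives the exact duality identity $(g,w)=B((\rho,w),(\xi,z))$ for every $(\rho,w)\in V^{k-1}\times V^k$, with $B$ the form \eqref{eq:bf}. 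I apply it with $w=\Pi_h e_u\in V^k_h$ and, using the freedom in the first slot, with $\rho=e_\sigma\in V^{k-1}$. Splitting $(\xi,z)=(\xi-\xi_h,z-z_h)+(\xi_h,z_h)$ gives
\[
(\Pi_h e_u,g)=B((e_\sigma,\Pi_h e_u),(\xi-\xi_h,z-z_h))+B((e_\sigma,\Pi_h e_u),(\xi_h,z_h)).
\]

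For the first term I invoke Theorem~\ref{thm:ddt} with $(\rho,w)=(e_\sigma,\Pi_h e_u)$ (legitimate since $e_\sigma\in V^{k-1}$ and $\Pi_h e_u\in V^k_h$) and then eliminate $\|\Pi_h e_u\|$ and $\|d\Pi_h e_u\|$ via Theorem~\ref{thm:Pih}: $\|\Pi_h e_u\|\lesssim\|e_u\|+\eta\|de_u\|$ and $\|d\Pi_h e_u\|=\|P_{\B_h}de_u\|\le\|de_u\|$. After absorbing $\eta^2,\delta\le\alpha$, this already yields all the terms on the right-hand side of the claim, including the $\mu_0\|e_u\|$ and $\mu_0\eta\|de_u\|$ contributions. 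For the second term I use Galerkin orthogonality in the form $B((e_\sigma,e_u),(\xi_h,z_h))=0$ (equivalent to \eqref{eq:ge}, valid because $(\xi_h,z_h)\in V^{k-1}_h\times V^k_h$), which collapses it to $-B((0,(I-\Pi_h)e_u),(\xi_h,z_h))$.

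The heart of the proof is estimating $B((0,w),(\xi_h,z_h))$ with $w=(I-\Pi_h)e_u$. Expanding \eqref{eq:bf}, the two highest-order terms $(dw,dz_h)$ and $-(w,d\xi_h)$ vanish by orthogonality: both $dz_h$ and $d\xi_h=-dd^*_hz_h-P_{\B_h}dl_3^*z$ lie in $\B_h$, while $dw=(I-P_{\B_h})de_u\perp\B_h$ and $(I-\Pi_h)e_u\perp\B_h$ (the latter from $\B_h\subset\z_h$ and $d\Pi_h=P_{\B_h}d$). The surviving terms all carry a factor $l_i$, and here I would not estimate them one at a time but group them to match the approximation quantities in \eqref{eq:approx}. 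Combining the $l_1$ contribution $(l_1w,dz_h)$ with the $l_2$ part of $-(l_2w,\xi_h)$ and replacing $dz_h,d^*_hz_h$ by $dz,d^*z$ at the cost of $\eta_0\|g\|$ (Theorem~\ref{thm:mep}) reduces the principal piece to $(w,(l_1^*d-l_2^*d^*)z)$; I bound this by the partial orthogonality of $\Pi_h$ (Theorem~\ref{thm:Pih}), using $\|(I-\pi_h)e_u\|\lesssim\|e_u\|$ (uniform $W$-boundedness of $\pi_h$), the bound $\|(I-\pi_h)(l_1^*d-l_2^*d^*)Kg\|\lesssim\eta\|g\|$ coming from the $\eta$-quantity in \eqref{eq:approx}, and the extra hypothesis of Theorem~\ref{thm:improved_est} to control $\|d(l_1^*d-l_2^*d^*)Kg\|$. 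Likewise the $l_5$ term $(l_5w,z_h)$ pairs with the $l_2l_3$ cross term arising from $\Pi_hl_3^*z$ in $\xi_h$ to form $(w,(l_5^*-l_2^*l_3^*)z)$, controlled through the $\delta$-quantity $\|(I-P_h)(l_5^*-l_2^*l_3^*)K\|$, while the $l_4$ term $(l_4dw,z_h)=(dw,l_4^*z_h)$ is handled by observing that $dw\in\B$ and $dw\perp\B_h$, so that only $(dw,(I-P_{\B_h})P_\B l_4^*z)$ survives and is bounded by $\delta\|de_u\|\,\|g\|$.

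The main obstacle is precisely this last step: getting the powers of $\eta$, $\alpha$, $\mu_0$ and the $\chi$-indicators exactly right. The subtlety is that the lower-order terms must be combined in the same groupings that appear in \eqref{eq:approx} --- $l_1$ with $l_2$, and $l_5$ with $l_2l_3$ --- since estimating them separately would cost a full power of $\eta$ or $\delta$ and destroy the stated rates; and the additional smallness (one more factor of $\eta$ beyond the crude bound, or a factor $\alpha$ for the $l_4,l_5$ terms) is recovered only by exploiting the orthogonality relations $w,dw\perp\B_h$ and $dw\in\B$ in tandem with the partial orthogonality of $\Pi_h$, not by norm estimates alone. Once these groupings are in place, collecting the contributions and simplifying with $\eta_0,\mu_0\le\eta$ and $\eta^2,\delta\le\alpha$ yields the asserted inequality.
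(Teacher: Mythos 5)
Your proposal is correct and follows essentially the same route as the paper's proof: the same dual pair $(\xi_h,z_h)$ from Theorem~\ref{thm:ddt}, the same Galerkin-orthogonality reduction to $B((0,(I-\Pi_h)e_u),(\xi_h,z_h))$, the vanishing of the two top-order terms, and the same groupings $(l_1^*d-l_2^*d^*)K$, $(l_5^*-l_2^*l_3^*)K$, and $P_\B l_4^*K$ matched to the quantities in \eqref{eq:approx} via the partial orthogonality of $\Pi_h$, with the first term absorbed exactly as you describe. The only nitpick is that $(I-\Pi_h)e_u\perp\B_h$ follows from $P_{\B_h}\Pi_h=P_{\B_h}$ (a consequence of $\B_h\subset\z_h$ and the definition of $\Pi_h$), rather than from $d\Pi_h=P_{\B_h}d$ as your parenthetical suggests; the latter identity is what kills the $(dw,dz_h)$ term, and this is immaterial to the argument.
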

\begin{proof}
  Given $g$, let $(\xi,z)$ and $(\xi_h,z_h)$ be defined as in
  Theorem~\ref{thm:ddt}. We have,
  \begin{equation*}
    (\Pi_h e_u, g) = B((e_\sigma,\Pi_h e_u),(\xi,z))
    =B((e_\sigma,\Pi_h e_u),(\xi_h,z_h))-B((e_\sigma,\Pi_h e_u),(e_\xi,e_z)).
  \end{equation*}
  Galerkin orthogonality~\eqref{eq:ge} states that
  $B((e_\sigma,e_u),(\tau,v))=0$ for all discrete $(\tau,v)$. Thus,
  \begin{equation*}
    (\Pi_h e_u, g)=-B((0,(I-\Pi_h) e_u),(\xi_h,z_h))
    -B((e_\sigma,\Pi_h e_u),(e_\xi,e_z)).
  \end{equation*}
  The second term above can be estimated by the last inequality in
  Theorem~\ref{thm:ddt} and
  \begin{equation*}
    \norm{\Pi_he_u}\lesssim \norm{e_u}+\eta\norm{de_u}, \qquad
    \norm{d\Pi_he_u}\lesssim \norm{de_u}.
  \end{equation*}
  The result is the following bound:
  \begin{multline*}
    |B((e_\sigma,\Pi_h e_u),(e_\xi,e_z))|\lesssim
    [\eta\norm{e_\sigma} + \alpha\norm{de_\sigma}
    +(\mu_0+\chi_{123}\eta+\chi_5\alpha)\norm{e_u}\\
    +(\mu_0\eta+\chi_4\alpha)\norm{de_u}]\norm{g}.
  \end{multline*}
  We bound the first term before by splitting it into three parts:
  \begin{multline*}
    |B((0,(I-\Pi_h)e_u),(\xi_h,z_h))|
    \leq
    |((I-\Pi_h)e_u,d \xi_h)+(d(I-\Pi_h)e_u,d z_h)|
    \\
    +|(l_2(I-\Pi_h)e_u,\xi_h) 
     +(l_1(I-\Pi_h)e_u,d z_h)
     +(l_5(I-\Pi_h)e_u, z_h)|\\
    +|(l_4d(I-\Pi_h)e_u,z_h)|
    =:Q_1+Q_2+Q_3.
  \end{multline*}
  We have $Q_1=0$ because $P_{\B_h}\Pi_h=P_{\B_h}$ and $d\Pi_h=P_{\B_h}d$.
  The second term is:
  \begin{equation*}
    Q_2\leq
    |((I-\Pi_h)e_u,l_2^*\xi+l_1^*dz+l_5^*z)|
    +|((I-\Pi_h)e_u,l_2^*e_\xi+l_1^*de_z+l_5^*e_z)|.
  \end{equation*}
  We note that
  $l_2^*\xi+l_1^*dz+l_5^*z=[-l_2^*d^*+l_1^*d+(l_5^*-l_2^*l_3^*)]Kg$. The
  first term above can be bounded by the regularity assumptions and the
  last estimate in Theorem~\ref{thm:Pih}. The second term can be bounded
  using estimates in Theorem~\ref{thm:ddt}. The result is:
  \begin{equation*}
    Q_2\lesssim
    [(\chi_{12}\eta+\chi_2\delta+\chi_5\alpha)\norm{e_u}+
     \chi_{125}\alpha\norm{de_u}]\norm{g}.
  \end{equation*}
  For $Q_3$, we have,
  \begin{equation*}
    Q_3 = |((I-P_{\B_h})de_u,l_4^*z_h)|
    \leq |((I-P_{\B_h})de_u,l_4^*z)|
    +|((I-P_{\B_h})de_u,l_4^*e_z)|.
  \end{equation*}
  For the first term
  $((I-P_{\B_h})de_u,l_4^*z)=(de_u, (P_{\B}-P_{\B_h})l_4^*z)$ can be
  bounded using~\ref{eq:approx}. The second term can be bounded by
  $|(de_u,l_4^*e_z)|$ and Theorem~\ref{thm:ddt}. The final result is,
  \begin{equation*}
    Q_3 \lesssim \chi_4\alpha\norm{de_u}\norm{g}.
  \end{equation*}
  Combining the all estimates together, we get the estimate in the claim.
\end{proof}

\subsection{Preliminary estimate for $\norm{e_u}$}
\begin{lemma}
  \label{lem:pu}
  Under the assumptions of Theorem~\ref{thm:improved_est}, we have
  \begin{equation*}
    \norm{e_u}\lesssim 
    \norm{E_u}+\eta\norm{e_\sigma}+\alpha\norm{de_\sigma}
    +\eta\norm{de_u}.
  \end{equation*}
\end{lemma}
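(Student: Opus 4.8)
The plan is to estimate $\|e_u\|$ by splitting it according to the Hodge-type decomposition induced by $\Pi_h$, and then to handle the resulting pieces using the tools just established. Recall $\Pi_h = P_{\z_h} + d^*_h K_{0h} P_h d$ from Theorem~\ref{thm:Pih}, so that $I - \Pi_h$ captures the part of $e_u$ missed by the generalized canonical projection. I would write $\|e_u\| \le \|\Pi_h e_u\| + \|(I-\Pi_h)e_u\|$ and treat the two terms separately.

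First I would dispose of $\|(I-\Pi_h)e_u\|$ using the second estimate in Theorem~\ref{thm:Pih}, namely $\|(I-\Pi_h)w\| \lesssim \|(I-\pi_h)w\| + \eta_0\|dw\|$ applied to $w = e_u$. Writing $e_u = (u - \pi_h u) + (\pi_h u - u_h) = E_u + (\pi_h u - u_h)$ and using that $\pi_h u - u_h$ is discrete (so $(I-\pi_h)$ of it relates to the commuting-projection structure), this term should reduce to a contribution of the form $\|E_u\| + \eta\|de_u\|$, since $\eta_0 \le \eta$ by definition~\eqref{eq:approx}. The key point here is that $\Pi_h$ acts as the identity on discrete forms, so only the best-approximation error $E_u$ and the derivative error $\|de_u\|$ survive.

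The substantive term is $\|\Pi_h e_u\|$, and this is where the duality lemma (Lemma~\ref{lem:duality}) does the work. Since $\Pi_h e_u \in V_h^k \subset W$, I would bound its $W$-norm by duality: $\|\Pi_h e_u\| = \sup_{g} (\Pi_h e_u, g)/\|g\|$, the supremum over nonzero $g \in W$. Lemma~\ref{lem:duality} gives exactly the needed bound on $|(\Pi_h e_u, g)|$, yielding
\begin{equation*}
  \|\Pi_h e_u\| \lesssim \eta\|e_\sigma\| + \alpha\|de_\sigma\|
  + (\mu_0 + \chi_{123}\eta + \chi_5\alpha)\|e_u\|
  + (\mu_0\eta + \chi_{12345}\alpha)\|de_u\|.
\end{equation*}

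Finally I would combine the two pieces and absorb the self-referential $\|e_u\|$ term on the right. The coefficient of $\|e_u\|$ coming from $\|\Pi_h e_u\|$ is $\mu_0 + \chi_{123}\eta + \chi_5\alpha$, all of which tend to $0$ as $h \to 0$; so for $h$ small enough this term can be moved to the left-hand side and absorbed (this is the standard kickback argument, valid because of the discrete stability already guaranteed by Theorem~\ref{thm:stability}). The $\mu_0\|de_u\|$ and $\chi_{12345}\alpha\|de_u\|$ contributions are dominated by the cruder $\eta\|de_u\|$ term stated in the claim. I expect the main obstacle to be verifying that the various small coefficients genuinely collapse to the coarser bound advertised in the lemma — in particular checking that the chi-flagged terms and the products like $\mu_0\eta$ are all subsumed by $\eta\|de_u\|$, $\eta\|e_\sigma\|$, and $\alpha\|de_\sigma\|$ — rather than in any deep new estimate, since Lemma~\ref{lem:duality} already packages the hard duality analysis.
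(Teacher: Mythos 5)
Your proposal is correct and follows essentially the same route as the paper: split $e_u=\Pi_h e_u+(I-\Pi_h)e_u$, bound the second piece by Theorem~\ref{thm:Pih} (giving $\norm{E_u}+\eta\norm{de_u}$), bound the first by Lemma~\ref{lem:duality}, and absorb the small-coefficient $\norm{e_u}$ term for $h$ small. The only cosmetic difference is that the paper takes $g=\Pi_h e_u$ directly in the duality lemma rather than dualizing over all $g\in W$, which is equivalent; your closing checks that $\mu_0\eta+\chi_{12345}\alpha\lesssim\eta$ are exactly the bookkeeping the paper leaves implicit.
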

\begin{proof}
  Let $g=\Pi_he_u$ in Lemma~\ref{lem:duality} we get an estimate for
  $(\Pi_he_u, \Pi_he_u)$. Then,
  \begin{equation*}
    \norm{e_u}\leq\norm{\Pi_he_u}+\norm{(I-\Pi_h)e_u}
    \lesssim\norm{\Pi_he_u}+\norm{E_u}+\eta\norm{de_u}.
  \end{equation*}
  For sufficiently small $h$, we hide the $\norm{e_u}$ term on the right in
  the left-hand side. The result is the estimate in the claim.
\end{proof}

\subsection{Preliminary estimate for $\norm{e_\sigma}$}
\begin{lemma}
  \label{lem:psigma}
  Under the assumptions of Theorem~\ref{thm:improved_est}, 
  \begin{equation*}
    \norm{e_\sigma}\lesssim
    \norm{E_\sigma} + (\eta+\chi_{45}\sqrt{\alpha})\norm{de_\sigma}
    +(\chi_{24}+\chi_3\eta+\chi_5\sqrt{\eta})\norm{e_u}
    +(\chi_3\alpha+\chi_{45}\sqrt{\alpha})\norm{de_u}.
  \end{equation*}  
\end{lemma}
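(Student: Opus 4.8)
The plan is to work from the first error equation~\eqref{eq:ge1} and reduce $\norm{e_\sigma}$ to the quantities already controlled in Lemmas~\ref{lem:pdsigma}, \ref{lem:pdu}, and~\ref{lem:pu} together with the best-approximation error $\norm{E_\sigma}$. First I would write $e_\sigma = E_\sigma + \phi$ with $\phi := \pi_h\sigma - \sigma_h \in V_h^{k-1}$, so that $\norm{e_\sigma}\le\norm{E_\sigma}+\norm{\phi}$ and it suffices to bound the discrete quantity $\norm{\phi}$. Using the discrete Hodge decomposition I split $\phi = P_{\z_h}\phi + P_{\B^*_h}\phi$ and estimate the two pieces separately, noting $d\phi = d P_{\B^*_h}\phi = de_\sigma - dE_\sigma$, so $\norm{d\phi}\lesssim \norm{de_\sigma}+\norm{dE_\sigma}$. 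For the closed part, testing~\eqref{eq:ge1} with $\tau = P_{\z_h}\phi$ annihilates the $(e_u,d\tau)$ term because $dP_{\z_h}\phi = 0$, leaving $\norm{P_{\z_h}\phi}^2 = -(E_\sigma, P_{\z_h}\phi) + (l_2 e_u, P_{\z_h}\phi)$ and hence $\norm{P_{\z_h}\phi}\lesssim \norm{E_\sigma} + \chi_2\norm{e_u}$; this is the origin of the $\chi_2$ contribution to the $\chi_{24}$ coefficient.

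The co-exact part is the crux. Testing~\eqref{eq:ge1} with $\tau = \psi := P_{\B^*_h}\phi$ gives $\norm{\psi}^2 = -(E_\sigma,\psi) + (e_u, d\phi) + (l_2 e_u,\psi)$, and the delicate term is $(e_u, d\phi)$. Since $d\Pi_h = P_{\B_h}d$ and $\B_h\subset\z_h$, one checks that $((I-\Pi_h)e_u, d\phi)=0$, so $(e_u,d\phi)=(\Pi_h e_u, d\phi)$, which I would bound by the duality lemma (Lemma~\ref{lem:duality}) with $g = d\phi$, combined with $\norm{d\phi}\lesssim\norm{de_\sigma}+\norm{dE_\sigma}$. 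The principal part of this pairing is what must supply the $\eta\norm{de_\sigma}$ term, whereas the lower-order operators enter only through the $\chi$-flagged terms of Lemma~\ref{lem:duality} and thus only perturb the coefficients there.

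Finally I would assemble a quadratic inequality $\norm{\psi}^2\lesssim a\norm{\psi}+b$, where $a$ collects the first-power contributions and $b$ the genuinely quadratic cross terms, and solve it as $\norm{\psi}\lesssim a+\sqrt{b}$. The square-root coefficients $\sqrt{\alpha}$ and $\sqrt{\eta}$ in the statement arise precisely here: applying Young's inequality $\sqrt{XY}\le \theta X + (4\theta)^{-1}Y$ to the geometric-mean terms in $\sqrt{b}$ with a fixed small weight $\theta$ lets me absorb the $\norm{e_\sigma}$- and $\norm{\phi}$-type factors (into the left side and into $\norm{E_\sigma}$, using $h\le h_0$), while the remaining factor yields the advertised forms such as $\chi_5\sqrt{\eta}\norm{e_u}$ and $\chi_{45}\sqrt{\alpha}\bigl(\norm{de_\sigma}+\norm{de_u}\bigr)$. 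Adding the closed and co-exact estimates and invoking the triangle inequality with $\norm{E_\sigma}$ then gives the claim.

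I expect the main obstacle to be securing the sharp factor $\eta$ (rather than the weaker $\sqrt{\alpha}$, which $\ge\eta$ in general) on $\norm{de_\sigma}$ in the unperturbed principal term. This forces a careful separation of the principal and perturbation contributions to $(e_u,d\phi)$, relying on the sharp $d^*$-estimate in~\eqref{eq:est-op} for the former so that the $\alpha$-level terms of Lemma~\ref{lem:duality} attach only to $\chi$-flagged pieces. Running in parallel with this, the meticulous bookkeeping of the $\chi$-flags through Lemma~\ref{lem:duality} and the Young's-inequality balancing — ensuring each lower-order term lands at exactly the stated power of $\eta$ or $\alpha$ and that no spurious $\norm{E_u}$ or $\norm{dE_\sigma}$ dependence survives — will be the most error-prone part of the argument.
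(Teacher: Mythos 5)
Your scaffolding is sound up to the crux: the split $e_\sigma=E_\sigma+\phi$, the discrete Hodge decomposition, the closed-part estimate producing the $\chi_2$ contribution, and the identity $(e_u,d\phi)=(\Pi_he_u,d\phi)$ (which holds because $d\phi\in\B_h$ and $P_{\B_h}\Pi_h=P_{\B_h}$) are all correct; the paper avoids the decomposition by testing \eqref{eq:ge1} directly with $\tau=\Pi_he_\sigma$ and handling $(e_\sigma,(I-\Pi_h)e_\sigma)$ via Theorem~\ref{thm:Pih}, but that difference alone would be cosmetic. The genuine gap is the datum you feed into the duality lemma. You take $g=d\phi$ and bound $\norm{g}\lesssim\norm{de_\sigma}+\norm{dE_\sigma}$, an order-one quantity; since the bound of Lemma~\ref{lem:duality} is proportional to $\norm{g}$, you inherit unflagged products such as $\alpha\norm{de_\sigma}^2$ and $\mu_0\norm{e_u}\norm{de_\sigma}$, and after solving your quadratic inequality these become unflagged terms like $\sqrt{\alpha}\norm{de_\sigma}$ and $\sqrt{\mu_0}\norm{e_u}$. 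That is strictly weaker than the statement, where every $\sqrt{\alpha}$ and $\sqrt{\eta}$ carries a $\chi$-flag: with all $l_i=0$ your bound fails to reduce to the sharp unperturbed estimate $\norm{e_\sigma}\lesssim E(\sigma)+\eta E(d\sigma)$ of \eqref{eq:werr0}, and in general $\sqrt{\alpha}$ can be far larger than $\eta$ (with only $(1+\epsilon)$-regularity, $\delta\sim\eta$ gives $\sqrt{\alpha}\sim\sqrt{\eta}\gg\eta$). You identified this as the main obstacle yourself, but the remedy you propose---separating principal and perturbation contributions inside the duality estimate using the sharp $d^*$ bound of \eqref{eq:est-op}---cannot work: the loss sits in the external factor $\norm{g}$, which multiplies every term of Lemma~\ref{lem:duality} uniformly, so no rearrangement inside the lemma removes it.

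The missing idea is equation \eqref{eq:pbhdsigma}: restricting the second error equation \eqref{eq:ge2} to test functions in $\B_h$ gives $P_{\B_h}(de_\sigma+l_3e_\sigma+l_4de_u+l_5e_u)=0$, hence $\norm{P_{\B_h}de_\sigma}\lesssim\chi_3\norm{e_\sigma}+\chi_4\norm{de_u}+\chi_5\norm{e_u}$. In other words, the $\B_h$-component of $de_\sigma$---the only part your pairing actually sees---is itself a flagged lower-order quantity, vanishing when $l_3=l_4=l_5=0$. The paper exploits this by writing $(e_u,d\Pi_he_\sigma)=(\Pi_he_u,P_{\B_h}de_\sigma)$ (using $d\Pi_h=P_{\B_h}d$ and $P_{\B_h}\Pi_h=P_{\B_h}$) and applying Lemma~\ref{lem:duality} with the \emph{small} datum $g=P_{\B_h}de_\sigma$; every term of the resulting product then has at least one flagged factor, and the Young-inequality step yields exactly the coefficients $\chi_{45}\sqrt{\alpha}$, $\chi_3\eta$, $\chi_5\sqrt{\eta}$ of the statement. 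Note also that even after this correction your decomposition carries a residue: $d\phi=P_{\B_h}de_\sigma+(\pi_h-P_{\B_h})d\sigma$, so you would still face the unflagged cross term $(\Pi_he_u,(\pi_h-P_{\B_h})d\sigma)$, of size $\norm{e_u}\,\norm{dE_\sigma}$, whose square root is already of order $h^{r+1}$ for smooth unperturbed problems with the pair $\mathcal{P}_{r+1}\Lambda^{k-1}\times\mathcal{P}_r\Lambda^k$ and would spoil the optimal $O(h^{r+2})$ rate. This is presumably why the paper tests with $\Pi_he_\sigma$ directly: there $d\Pi_he_\sigma=P_{\B_h}de_\sigma$ holds exactly and no such remainder appears.
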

\begin{proof}
  Equation \eqref{eq:ge1} implies:
  \begin{multline*}
    (e_\sigma, e_\sigma)
    =(e_\sigma, (I-\Pi_h)e_\sigma)+(e_\sigma, \Pi_h e_\sigma) \\
    =(e_\sigma, (I-\Pi_h)e_\sigma)
    +(l_2e_u, \Pi_h e_\sigma)
    +(e_u,d\Pi_he_\sigma).
  \end{multline*}
  The first term above is bounded by:
  \begin{equation*}
    |(e_\sigma, (I-\Pi_h)e_\sigma)|
    \leq\norm{e_\sigma}\norm{(I-\Pi_h)\sigma}
    \lesssim (\norm{E_\sigma}+\eta\norm{dE_\sigma})\norm{e_\sigma}.
  \end{equation*}
  The second term is bounded using
  $\norm{\Pi_he_\sigma}\lesssim \norm{e_\sigma}+\eta\norm{de_\sigma}$ from
  theorem \ref{thm:Pih},
  \begin{equation*}
    |(l_2e_u, \Pi_h e_\sigma)|
    \lesssim \chi_2\norm{e_u}(\norm{e_\sigma}+\eta\norm{de_\sigma})
  \end{equation*}
  The last term is estimated by the duality lemma.
  Because $P_{\B_h}\Pi_h=P_{\B_h}$, we have,
  \begin{equation*}
    (e_u,d\Pi_he_\sigma)
    =(e_u,P_{\B_h}de_\sigma)
    =(P_{\B_h}e_u,P_{\B_h}de_\sigma)
    =(\Pi_he_u,P_{\B_h}de_\sigma).
  \end{equation*}
  We apply \ref{lem:duality} with $g=P_{\B_h}de_\sigma$ and get
  \begin{multline*}
    |(\Pi_h e_u,g)|\lesssim
    [\eta\norm{e_\sigma} + \alpha\norm{de_\sigma}
    +(\mu_0+\chi_{123}\eta+\chi_5\alpha)\norm{e_u}\\
    +(\mu_0\eta+\chi_{12345}\alpha)\norm{de_u}]\norm{P_{\B_h}de_\sigma}.
  \end{multline*}
  From equation~\eqref{eq:pbhdsigma}, we have,
  \begin{equation*}
    \norm{P_{\B_h}de_\sigma} \lesssim
    \chi_3\norm{e_\sigma}+\chi_4\norm{de_u}+\chi_5\norm{e_u}.
  \end{equation*}
  Combining all these estimates, we get the estimate in the claim.
\end{proof}

\subsection{Proof of Improved Error Estimates Theorem}
The estimates in Theorem~\ref{thm:improved_est} are derived from the four
preliminary estimates Lemma \ref{lem:pdsigma}, Lemma \ref{lem:pdu}, Lemma
\ref{lem:pu}, and Lemma \ref{lem:psigma}. Using $1$ for quantities which
are bounded and $\epsilon$ for quantities which goes to zero as
$h\rightarrow 0$, the four preliminary estimates has the following
structure:
\begin{equation*}
  \begin{bmatrix}
    \norm{de_\sigma}\\
    \norm{de_u}\\
    \norm{e_\sigma}\\
    \norm{e_u}
  \end{bmatrix}
  \lesssim
  \begin{bmatrix}
    0        &        1 &        1 &        1 \\
    \epsilon &        0 &        1 &        1 \\
    \epsilon & \epsilon &        0 &        1 \\
    \epsilon & \epsilon & \epsilon &        0 
  \end{bmatrix}
  \begin{bmatrix}
    \norm{de_\sigma}\\
    \norm{de_u}\\
    \norm{e_\sigma}\\
    \norm{e_u}
  \end{bmatrix}
  +
  \begin{bmatrix}
    \norm{dE_\sigma}\\
    \norm{dE_u}\\
    \norm{E_\sigma}\\
    \norm{E_u}
  \end{bmatrix}
\end{equation*}
For example, we can substitute the first line into the second and hide the
$\epsilon\norm{de_u}$ term in the left-hand side of the second line
assuming $h$ is sufficiently small. This, in effect, switches
$\norm{de_\sigma}$ in the second line to $\norm{dE_\sigma}$. Due the
vanishing diangonal and epsilon lower-triangle of the matrix above, this
procedure can be repeated to eliminate all unknown error terms like
$\norm{e_u}$ and switch them with known error terms like
$\norm{E_u}$. After this linear algebra exercise, we get the estimates in
Theorem~\ref{thm:improved_est}.

\section{Numerical examples}
\label{sec:numexp}
In this section, we show through numerical examples that the error rates
given by Theorem~\ref{thm:improved_est} in Table~\ref{tb:conv} are in fact
achieved and cannot be improved.

In 3D, there are four cases of the Hodge Laplace problems for differential
forms of degree $0,1,2,3$. The $0$-form and $3$-form cases lead to the
scalar Laplace problem in the non-mixed form and mixed form respectively.
The numerical results in these two cases are well-known \cite{Schatz1974,
  Douglas1982, Douglas1985} and will not be duplicated here. We focus on
the $1$-form and $2$-form case.

Let $\Omega=[0,1]^3$ be the unit cube in $\sn{R}^3$. The $1$-form mixed
Hodge Laplace problem with natural boundary conditions is: given
$f\in L^2$, find $u\in D$ satisfying:
\begin{gather*}
  (\grad+l_3)(-\Div+l_2) u + \curl(\curl+l_1)u + l_4\curl u + l_5u=f,
  \qquad  \text{in $\Omega$}, \\
  u\cdot n = 0, \qquad (\curl u + l_1 u) \times n = 0,
  \qquad  \text{on $\partial\Omega$}.
\end{gather*}
We choose the following smooth function as the exact solution:
\begin{equation*}
  u =
  \begin{bmatrix}
    \sin \pi x \cos \pi z \\
    \cos \pi x \sin \pi y \\
    \cos \pi y \sin \pi z
  \end{bmatrix},
\end{equation*}
with the coefficients:
\begin{gather*}
  l_1 =
  \begin{bmatrix}
    \sin\pi y \sin\pi z & \sin\pi z & \sin\pi y \\
    \sin\pi z & \sin\pi x \sin\pi z & \sin\pi x \\
    \sin\pi y & \sin\pi x & \sin\pi x \sin\pi y
  \end{bmatrix}, \quad
  l_2 =
  \begin{bmatrix}
    2 \\ 0 \\ 1
  \end{bmatrix}, \\
  l_3 =
  \begin{bmatrix}
    3 \\ 2 \\ -1
  \end{bmatrix}, \quad
  l_4 =
  \begin{bmatrix}
    1 & 2 &-1 \\ 
    3 &-3 &-3 \\
    1 & 3 & 1    
  \end{bmatrix}, \quad
  l_5 =
  \begin{bmatrix}
    10 & 0 & 0 \\
    0  &10 & 0 \\
    0  & 0 & 0 
  \end{bmatrix}.
\end{gather*}
At the discrete level, since $\sigma=(-\Div+l_2)u$ is a $0$-form where
$\mathcal{P}_{r}\Lambda^0=\mathcal{P}^-_{r}\Lambda^0$, we only have two
element pairs: $\mathcal{P}_{r}\Lambda^0\times \mathcal{P}_{r-1}\Lambda^1$
and $\mathcal{P}_{r}\Lambda^0\times \mathcal{P}^-_r\Lambda^1$.

On the same domain $\Omega=[0,1]^3$ as before, the $2$-form mixed Hodge
Laplace problem with natural boundary conditions is: given
$f\in L^2$, find $u\in D$ satisfying:
\begin{gather*}
  (\curl+l_3)(\curl+l_2) u -\grad (\Div+l_1)u + l_4\Div u + l_5u.  
  \qquad  \text{in $\Omega$}, \\
  u\times n = 0, \qquad (\Div + l_1) u = 0,
  \qquad  \text{on $\partial\Omega$}.
\end{gather*}
We choose the following smooth function as the exact solution in this case:
\begin{equation*}
  u =
  \begin{bmatrix}
    (\cos\pi x+3)\sin\pi y \sin\pi z \\
    \sin\pi x(\cos\pi y+2)\sin\pi z \\
    \sin\pi x \sin\pi y(\cos\pi z+2)
  \end{bmatrix},
\end{equation*}
with the coefficients
\begin{gather*}
  l_1 =
  \begin{bmatrix}
    \sin\pi x \\ -\sin\pi y \\ 0
  \end{bmatrix}, \qquad
  l_2 =
  \begin{bmatrix}
    1 & 2 &-1 \\
    2 &-2 & 0 \\
    1 & 3 & 1
  \end{bmatrix}, \\
  l_3 =
  \begin{bmatrix}
    1 & 0 &-1 \\
    0 &-1 & 0 \\
    1 & 2 & 1
  \end{bmatrix}, \quad
  l_4 =
  \begin{bmatrix}
    1 \\ 2 \\ -1
  \end{bmatrix}, \quad
  l_5 =
  \begin{bmatrix}
    10 & 0 & 0 \\
    0  &10 & 0 \\
    0  & 0 & 0 
  \end{bmatrix}.
\end{gather*}
At the discrete level, we have all four canonical pairs: 
\begin{equation*}
  \mathcal{P}_{r+1}\Lambda^1\times \mathcal{P}_{r}\Lambda^2, \quad
  \mathcal{P}^-_{r+1}\Lambda^1\times \mathcal{P}_{r}\Lambda^2,  \quad
  \mathcal{P}_{r}\Lambda^1\times \mathcal{P}_{r}^-\Lambda^2,  \quad
  \mathcal{P}^-_{r}\Lambda^1\times \mathcal{P}^-_{r}\Lambda^2.
\end{equation*}

In all the numerical experiments, we obtain a quasi-uniform triangulation
of size $m$ for the unit cube $\Omega$ by first triangulating it uniformly
with an $m\times m\times m$ mesh and then perturbing each interior mesh
node randomly within $20\%$ of the mesh size $1/m$ in all three coordinate
directions.

All four pairs of the canonical FEEC elements in dimension $\leq 3$ of all
degrees are supported by the open source finite element package FEniCS
\cite{Logg2012}, in which all our numerical codes are implemented.

For example, for the unperturbed $1$-form problem, with
$\mathcal{P}_{2}\Lambda^1\times \mathcal{P}_{1}\Lambda^2$, we get
\begin{center}
  \begin{tabular}{ r | r  r | r  r | r  r | r  r }
    \hline
    m 
    & $\norm{\sigma-\sigma_h}$   & rate
    & $\norm{d(\sigma-\sigma_h)}$ & rate
    & $\norm{u-u_h}$             & rate
    & $\norm{d(u-u_h)}$          & rate \\\hline
    2 & 2.766e-01 &      & 2.362e+00 &      & 2.244e-01 &      & 1.441e+00 &\\
    4 & 3.940e-02 & 2.37 & 8.529e-01 & 1.24 & 6.961e-02 & 1.43 & 7.434e-01 & 0.81\\
    8 & 4.504e-03 & 3.20 & 2.312e-01 & 1.93 & 1.859e-02 & 1.95 & 3.740e-01 & 1.01\\
    16& 5.208e-04 & 2.98 & 5.716e-02 & 1.93 & 4.659e-03 & 1.91 & 1.868e-01 & 0.96\\
    \hline
  \end{tabular}  
\end{center}
Thus, for example, $\norm{\sigma-\sigma_h}_{L^2}=2.766\times 10^{-1}$ on a
$2\times2\times 2$ mesh. The rates are computed between the two successive
errors. The optimal rates of $3,2,2,1$ for $\sigma,d\sigma,u,du$ are clear.
With an $l_4$ lower-order perturbation, we get:
\begin{center}
  \begin{tabular}{ r | r  r | r  r | r  r | r  r }
    \hline
    m 
    & $\norm{\sigma-\sigma_h}$   & rate
    & $\norm{d(\sigma-\sigma_h)}$ & rate
    & $\norm{u-u_h}$             & rate
    & $\norm{d(u-u_h)}$          & rate \\\hline
    2 & 4.190e-01 &      & 3.522e+00 &      & 2.215e-01 &      & 1.434e+00
&\\
    4 & 1.259e-01 & 1.44 & 1.583e+00 & 0.96 & 6.773e-02 & 1.42 & 7.335e-01 & 0.81\\
    8 & 3.689e-02 & 1.54 & 7.393e-01 & 0.96 & 1.874e-02 & 1.61 & 3.749e-01 & 0.84\\
    16& 9.968e-03 & 1.86 & 3.595e-01 & 1.02 & 4.799e-03 & 1.93 & 1.868e-01 & 0.99\\
    \hline
  \end{tabular}  
\end{center}
Clearly the convergence rates for $\sigma$ and $d\sigma$ in $L^2$ are
reduced by $1$ as predicted.

There are too many cases for us to list all the detailed results. We
instead only summarize the numerical results here. First, the error rates
in Table~\ref{tb:conv} are guaranteed for all FEEC elements for the Hodge
Laplace problem. Second, for each case with a reduction in the error rates
predicted in Table~\ref{tb:conv}, there is at least one Hodge Laplace
problem with a certain form degree which can only converge at that reduced
rate. In this sense, the rates in Table~\ref{tb:conv} is optimal. However,
we note that the rates in Table~\ref{tb:conv} do no represent an upper
bound for all possible cases. For example, when the $l_5$ term is given by
multiplication by a smooth scalar coefficient, we do not observe a
reduction of convergence rates in the $L^2$-error of $\sigma$. We also
observed that for $1$-forms in 3D, an $l_2$-term given by multiplication by
a generic smooth coefficient do not degrade the $L^2$-error rate in
$\sigma$.

The full numerical results along with the python source code used in FEniCS
can be found at the companion code repository at
\begin{center}
  \url{https://bitbucket.org/lzlarryli/feeclotexp}.
\end{center}
We note that due to the randomness involved (random perturbation applied to
the mesh), the error numbers will not be exactly the same but very close to
what we have listed here.

\bibliographystyle{amsplain}

% Below is generated by BibTeX

\providecommand{\bysame}{\leavevmode\hbox to3em{\hrulefill}\thinspace}
\providecommand{\MR}{\relax\ifhmode\unskip\space\fi MR }
% \MRhref is called by the amsart/book/proc definition of \MR.
\providecommand{\MRhref}[2]{%
  \href{http://www.ams.org/mathscinet-getitem?mr=#1}{#2}
}
\providecommand{\href}[2]{#2}

\end{document}